\documentclass[11pt]{article}
\usepackage{amssymb}
\usepackage{latexsym}
\usepackage{amsfonts}
\usepackage{amsmath}
\usepackage{setspace}
\usepackage[hypertexnames=false,colorlinks=true, linkcolor=blue, urlcolor=red, citecolor=green]{hyperref}
\newtheorem{con0}{Theorem}[section]
\newtheorem{con1}[con0]{Condition}
\newtheorem{thrm}{Theorem}[section]
\newtheorem{lemma}[thrm]{Lemma}
\newtheorem{prop}[thrm]{Proposition}

\newtheorem{cor}[thrm]{Corollary}
\newtheorem{remark}[thrm]{Remark}
\newtheorem{exam}{Example}

\numberwithin{equation}{section}
\usepackage[dvips]{color}
\usepackage{mathtools}
\usepackage{mathrsfs}
\usepackage{comment}
\usepackage{xcolor}

\def\proof{\noindent{\it Proof.~}}
\def\qed{\hfill$\square$\smallskip}

\def\bgcondition{\begin{con1}}\def\edcondition{\end{con1}}

\def\blemma{\begin{lemma}}\def\elemma{\end{lemma}}
\def\bproposition{\begin{proposition}}\def\eproposition{\end{proposition}}
\def\btheorem{\begin{theorem}}\def\etheorem{\end{theorem}}
\def\bcorollary{\begin{corollary}}\def\ecorollary{\end{corollary}}
\def\bremark{\begin{remark}}\def\eremark{\end{remark}}
\def\bcondition{\begin{condition}}\def\econdition{\end{condition}}

\def\benumerate{\begin{enumerate}}\def\eenumerate{\end{enumerate}}
\def\bitemize{\begin{itemize}}\def\eitemize{\end{itemize}}

\def\beqlb{\begin{eqnarray}}\def\eeqlb{\end{eqnarray}}
\def\beqnn{\begin{eqnarray*}}\def\eeqnn{\end{eqnarray*}}

\def\qqquad{\qquad\qquad}
\def\proof{\noindent{\it Proof.~~}}
\def\qed{\hfill$\Box$\medskip}

\def\<{\langle}\def\>{\rangle}
\def\mcr{\mathscr}\def\mbb{\mathbb}\def\mbf{\mathbf}\def\mrm{\mathrm}

\makeatletter
\begin{document}
	\allowdisplaybreaks

	\title{\Large \bf Coupling for one-dimensional subcritical and critical CBI processes with jumps}
\author{ \bf
	Shukai Chen
	\hspace{1mm}\hspace{1mm} and \hspace{1mm}\hspace{1mm}
	Chunhua Ma
	}
	\date{}
	\maketitle
	
	\begin{abstract}
		 We develop a cluster representation for one-dimensional CBI processes with jumps. Using this representation, we establish total variation convergence under conditions on the branching L\'evy measure. In the subcritical case, we obtain polynomial and exponential rates under distinct regularity assumptions, while the strong Feller property is also established. In the critical case, we derive an explicit bound involving the cumulant integral. Our proofs use a coupling method that has proved effective for establishing ergodicity of Ornstein--Uhlenbeck processes.
	\end{abstract}
	
	\medskip
	
	\noindent\textbf{AMS 2020 Mathematics Subject Classification.}
	60J80; 60G51; 60H10.

	\medskip
	
	\noindent\textbf{Keywords and Phrases.}
	Ergodicity; coupling; CBI process; strong Feller property.

\section{Introduction and Main results}

\subsection{Introduction}

Let $\sigma\ge0$ and $\beta$ be constants, and suppose that $(\xi\wedge\xi^2)\nu(d\xi)$ is a finite measure on $(0,\infty)$. For $\lambda\ge 0$, define
\[
\Psi(\lambda) = -\beta\lambda + \frac{1}{2}\sigma^2 \lambda^2 + \int_0^\infty(e^{-\lambda \xi}-1+\lambda\xi)\nu(d\xi).
\]
A Markov process with state space $\mbb{R}_+=[0,\infty)$ is called a \textit{continuous-state branching process} (CB process) with branching mechanism $\Psi$ if its transition semigroup $(Q_t)_{t\ge0}$ satisfies
\begin{align}\label{CB}
\int_0^\infty e^{-\lambda y}Q_t(x,dy)
 =
e^{-x v_t(\lambda)},
\end{align}
where $t\mapsto v_t(\lambda)$ is the unique nonnegative solution to
\begin{align}\label{2.2}
\frac{\partial}{\partial t}v_t(\lambda) = -\Psi(v_t(\lambda)), \qquad
v_0(\lambda)=\lambda.
\end{align}
The CB process is called \textit{critical}, \textit{subcritical}, or \textit{supercritical} according to whether $\beta=0$, $\beta<0$, or $\beta>0$, respectively.
The semigroup $(Q_t)_{t\ge0}$ is Feller and therefore admits a Hunt realization. Let $X = (\Omega, \mcr{G}, \mcr{G}_t, X_t,
\mathbb{P}_x)$ be a Hunt realization of the CB process. The hitting time
$\tau_0 = \inf\{t\ge 0: X_t=0\}$ is called the \textit{extinction time}
of $X$. It follows from Li \cite[Theorem 3.5]{Li11} that for all $t\ge 0$, the
limit
\[
\bar{v}_t = \uparrow \lim_{\lambda\to \infty} v_t(\lambda)
\]
exists in $(0,\infty]$, and
\[
\mathbb{P}_x(\tau_0\le t) = \mathbb{P}_x(X_t=0) = \exp\{-x\bar{v}_t\}.
\]
Consequently, $\mathbb{P}_x(\tau_0< \infty)=\exp\{-x\bar{v}\}$,
where $\bar{v}:=\lim_{t \rightarrow +\infty}\bar{v}_{t}\in [0, +\infty]$.
Grey \cite{G74}, see also Li \cite{Li11}, established the following result.
\begin{prop}
$\bar{v}_t< \infty$ for all $t>0$ if
and only if
there exists a constant $\theta> 0$ such that
$\Psi(\lambda)>0$ for all $\lambda> \theta$ and
\begin{align}\label{t2.1}
\int_{\theta}^\infty \Psi(\lambda)^{-1}d\lambda< \infty.
\end{align}
Moreover, if $\eqref{t2.1}$ holds, then for any $x>0$, we have $\mathbb{P}_x(\tau_0< \infty)=1$ (i.e., $\bar{v}=0$) if and only if $\beta\leq0$.
\end{prop}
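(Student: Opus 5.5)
The plan is to work directly with the ODE \eqref{2.2} and its integrated form. Since $\Psi$ is convex with $\Psi(0)=0$, whenever $\Psi(\lambda)>0$ on $(\theta,\infty)$ and $\lambda>\theta$, the solution $t\mapsto v_t(\lambda)$ is strictly decreasing and stays above $\theta$. Separating variables gives
\begin{align*}
\int_{v_t(\lambda)}^{\lambda}\Psi(z)^{-1}\,dz = t .
\end{align*}
First we need to see when such a $\theta$ exists. If $\Psi(\lambda)\le 0$ for all $\lambda$, then $\Psi\le0$ everywhere, so $v_t(\lambda)\ge\lambda$. Hence $\bar v_t=\infty$, and the assumption $\bar v_t<\infty$ already forces $\Psi$ to be eventually positive. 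By convexity, $\Psi>0$ on $(\theta,\infty)$, where $\theta$ is the largest zero of $\Psi$.

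\emph{Sufficiency.} Assume \eqref{t2.1}. If $\bar v_t=\infty$ for some $t>0$, then letting $\lambda\to\infty$ in the displayed identity gives
\begin{align*}
t=\lim_{\lambda\to\infty}\int_{v_t(\lambda)}^{\lambda}\Psi(z)^{-1}dz=0,
\end{align*}
because both endpoints go to $\infty$ and the integral converges. This is a contradiction. More concretely, $v_t(\lambda)\le \phi^{-1}(t)$ uniformly in $\lambda$, where $\phi(u)=\int_u^\infty\Psi(z)^{-1}dz$ is decreasing from $\infty$ (near $\theta$, since $\Psi(z)\le C(z-\theta)$ there) down to $0$.

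\emph{Necessity.} Assume $\bar v_t<\infty$ for some $t>0$ and take $\theta$ as above. Since $v_t(\lambda)\le\bar v_t$, we get $\int_{\bar v_t}^{\lambda}\Psi^{-1}\le t$ for every $\lambda$. Letting $\lambda\to\infty$ shows that the tail integral is finite, which gives \eqref{t2.1}. The main care point is the direction of monotonicity of $v_t(\lambda)$ in $\lambda$, which follows from uniqueness for \eqref{2.2}. We also need $\bar v_t>\theta$, which holds because $v_t(\lambda)>\theta$ for $\lambda>\theta$.

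\emph{Extinction criterion.} Under \eqref{t2.1}, $\bar v_t=\phi^{-1}(t)$ is decreasing in $t$ and tends to $\bar v=\lim_{t\to\infty}\phi^{-1}(t)=\theta$, where $\theta$ is the largest root of $\Psi$. Indeed, as $t\to\infty$ we need $\phi(u)\to\infty$, which happens exactly as $u\downarrow\theta$, since $\Psi(z)\le C(z-\theta)$ near the simple or multiple zero. Thus $\mathbb{P}_x(\tau_0<\infty)=e^{-x\theta}$, and we need $\theta=0$ if and only if $\beta\le0$. Since $\Psi'(0+)=-\beta$ and $\Psi$ is convex:
\begin{itemize}
\item if $\beta\le0$, then $\Psi$ is nondecreasing and convex. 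It is also not identically zero, since otherwise \eqref{t2.1} fails, so $\Psi>0$ on $(0,\infty)$ and $\theta=0$;
\item if $\beta>0$, then $\Psi<0$ near $0^+$ while $\Psi$ is eventually positive, so $\theta>0$.
\end{itemize}

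The expected main obstacle is rigorously justifying the limit $\bar v=\theta$. This needs the divergence of $\int_{\theta+}\Psi^{-1}$, which follows from the Lipschitz bound $\Psi(z)\le \Psi'(z_0)(z-\theta)$ on $[\theta,z_0]$ given by convexity.
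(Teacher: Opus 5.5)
Your argument is correct and is essentially the standard proof behind this result, which the paper does not prove but only cites from Grey and Li. It runs through the separation-of-variables identity $\int_{v_t(\lambda)}^{\lambda}\Psi(z)^{-1}dz=t$, valid for $\lambda$ above the largest zero $\theta_0$ of $\Psi$ (this, not an arbitrary $\theta$ from \eqref{t2.1}, is the $\theta$ your opening ``stays above $\theta$'' claim needs). Letting $\lambda\to\infty$ gives $\bar v_t=\phi^{-1}(t)$, and then $\bar v=\theta_0$.

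The one step to tighten is the case $\beta\le0$. There, ``nondecreasing, convex and not identically zero'' does not by itself give $\Psi>0$ on $(0,\infty)$; consider $(\lambda-1)^+$. Use the specific form of $\Psi$ instead. Every term is nonnegative when $\beta\le0$, and $e^{-\lambda\xi}-1+\lambda\xi>0$ for $\lambda\xi>0$. So $\Psi(\lambda)=0$ for some $\lambda>0$ forces $\beta=\sigma=0$ and $\nu=0$, i.e.\ $\Psi\equiv0$, which contradicts \eqref{t2.1}.
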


Condition \eqref{t2.1} is commonly known as {\it Grey's condition}. A necessary condition for \eqref{t2.1} is either $\sigma>0$ or $\int_0^1\lambda\nu(d\lambda)=\infty$.
Now let $b\geq0$ be a constant and  $(1\wedge\xi)\,m(d\xi)$ be
a finite measure on $(0,\infty)$. Define the function $\Phi$ on $[0,
\infty)$ by
\[
\Phi(\lambda)=b\lambda+\int_0^\infty (1-e^{-\lambda\xi})\,m(d\xi).
\]
We refer to $\Phi$ as the {\it immigration mechanism}. Let $t\mapsto v_t(\lambda)$ be defined as in \eqref{2.2}. A Markov process
with state space $\mbb{R}_+$ is called a \textit{continuous-state branching process with immigration} (CBI process) with branching mechanism $\Psi$ and immigration mechanism $\Phi$
if its transition semigroup $(P_t)_{t\ge 0}$ satisfies
 \begin{align}\label{transition semigroup}
 \int_0^\infty e^{-\lambda y}P_t(x,dy)
 =
\exp\Big\{-xv_t(\lambda) - \int_0^t\Phi(v_s(\lambda))ds\Big\}.
 \end{align}
When $\Phi\equiv0$, a CBI process reduces to a CB process.

In this paper, we consider the case that $\sigma=0$, $m\equiv0$, and $b>0$. Then the mechanisms take the form
\begin{align}\label{mechanism}
\Psi(\lambda) = -\beta\lambda+ \int_0^\infty(e^{-\lambda \xi}-1+\lambda\xi)\nu(d\xi),\quad \Phi(\lambda)=b\lambda,\quad \lambda\ge0.
 \end{align}
Let $N(ds,du,d\xi)$ be a Poisson random measure on $(0,\infty)^3$ with intensity $dsdu\nu(d\xi)$. Then, for each $x\ge 0$, there
is a pathwise unique nonnegative strong solution $\{X^x_t:t\ge 0\}$ to the following stochastic
equation:
 \begin{align}\label{CBI SDE}
X^x_t=
x + \int_0^t(b+\beta X^x_s) ds
+\int_0^t\int_0^{X^x_{s-}}\int_0^\infty \xi \tilde{N}(ds,du,d\xi),
 \end{align}
where $\tilde{N}(ds,du,d\xi)=N(ds,du,d\xi)-dsdu\nu(d\xi)$. The solution is a CBI process; we refer to Dawson and Li \cite{DaL06,DaL12} or Li and Ma \cite{LiM08} for further details. This construction leads to the following useful property.
\begin{thrm}\label{branching property}\;
(Dawson and Li \cite[Theorem 1.2, Proposition 1.3]{DaL12}) For any $x\geq y\geq0$, we have almost surely that
\[
X^x_t\geq X^y_t ~\text{for~all}~ t\geq0,
\]
and the process $\{X^x_t-X^y_t:t\geq0\}$ is a CB process. Moreover, $\{X^x_t-X^y_t:t\geq0\}$ is independent of $\{X^y_t:t\geq0\}$.
\end{thrm}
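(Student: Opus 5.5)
The plan is to work directly with the stochastic equation \eqref{CBI SDE}, driving $X^x$ and $X^y$ with the same Poisson random measure $N$. The argument has three steps: comparison, an equation for the difference, and independence.

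\textbf{Comparison.} Set $Z_t=X^x_t-X^y_t$. Subtracting the two equations gives
\begin{align*}
Z_t = x-y + \int_0^t \beta Z_s\, ds + \int_0^t\int_{X^y_{s-}}^{X^y_{s-}+Z_{s-}}\int_0^\infty \xi\, \tilde N(ds,du,d\xi),
\end{align*}
where the $u$-integral is over an interval of signed length $Z_{s-}$ (with orientation when $Z_{s-}<0$).

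Comparison $Z_t\ge 0$ is the standard Yamada--Watanabe/Dawson--Li argument.
\begin{itemize}
\item Take smooth approximations $\phi_k$ of $z\mapsto z^-$ and apply It\^o's formula to $\phi_k(Z_t)$.
\item The drift term $\beta Z_s\phi_k'(Z_s)$ is controlled by a constant times $|Z_s|$.
\item The jump term is handled by the usual estimate: the integrand $\phi_k(z+\xi)-\phi_k(z)-\xi\phi_k'(z)$ is supported where $z<0$, and its compensator is bounded by $C|z|\int(\xi\wedge\xi^2)\nu(d\xi)$. This uses monotonicity of the intensity $u\le X_{s-}$ in the state.
\end{itemize}
Letting $k\to\infty$ and applying Gronwall to $\mathbb E[Z_t^-]$ (after localization) gives $Z_t^-=0$. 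Hence $Z_t\ge0$ for all $t$ a.s., using right continuity.

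\textbf{The difference is a CB process.} Once $Z\ge0$ is known, the noise driving $Z$ comes from the restriction of $N$ to the random strip $\{(s,u,\xi): X^y_{s-}<u\le X^y_{s-}+Z_{s-}\}$. I would define a new Poisson random measure by shifting the $u$-coordinate:
\[
N'(ds,du,d\xi)=\int 1_{\{u'-X^y_{s-}\in du\}}N(ds,du',d\xi)\quad\text{on } u>0 .
\]
Since $X^y_{s-}$ is predictable, $N'$ has compensator $ds\,du\,\nu(d\xi)$. By the martingale characterization (Watanabe's theorem for random measures), $N'$ is a Poisson random measure with the same intensity. Then $Z$ solves
\[
Z_t=x-y+\int_0^t\beta Z_sds+\int_0^t\int_0^{Z_{s-}}\int_0^\infty\xi\tilde N'(ds,du,d\xi),
\]
which is \eqref{CBI SDE} with $b=0$. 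By pathwise uniqueness, its law is that of the CB process with mechanism $\Psi$.

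\textbf{Independence.} The measure $N'$ restricted to $u>0$ corresponds to the part of $N$ above the curve $u=X^y_{s-}$. The equation for $X^y$ uses only the part below the curve. I would prove independence via the characteristic-functional/compensator argument: the two pieces $N1_{\{u\le X^y_{s-}\}}$ and $N'$ have no common jumps and have deterministic-type compensators relative to the joint filtration. Concretely, $X^y$ can be rebuilt from a Poisson measure $N''$ formed from the part of $N$ below the curve together with an independent copy for the unused region. The pair $(N'',N')$ is then jointly Poisson with independent components, by the multivariate Watanabe characterization. $X^y$ is a strong solution driven by $N''$, and $Z$ is a strong solution driven by $N'$, so the two processes are independent.

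The main obstacle is this last step: rigorously constructing $N'$ and $N''$ and showing joint Poisson-ness with independence when the cut is random. I would handle it with the exponential martingale test on $\exp\{i\langle f,N'\rangle+i\langle g,N''\rangle\}$ for deterministic $f,g$.
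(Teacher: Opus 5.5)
Your proof is correct. The paper gives no proof of this statement; it quotes Dawson and Li. So the comparison is with the standard argument, and your route is genuinely different: it is pathwise, where the standard one is distributional.

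\textbf{The distributional route.} This is also the exponential-martingale technique the paper uses for Proposition~\ref{cluster decomposition}. Once comparison is known, note that $X^y$ jumps only at atoms of $N$ with $u\le X^y_{s-}$, and $Z$ only at atoms with $X^y_{s-}<u\le X^x_{s-}$. So the two never jump together, and It\^o's formula shows that
\[
\exp\Big\{-X^y_t\,v_{T-t}(\lambda)-Z_t\,v_{T-t}(\mu)-b\int_0^{T-t}v_s(\lambda)\,ds\Big\},\qquad 0\le t\le T,
\]
is a bounded martingale for the filtration of $N$. Hence the conditional Laplace transform of $(X^y_T,Z_T)$ given $\mathcal F_r$ factorizes into the CBI and CB kernels. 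That is, $(X^y,Z)$ is Markov with semigroup $P_t\otimes Q_t$. This gives the CB property of $Z$ and the independence at once. It needs no new Poisson measures, no enlargement of the probability space, and no Yamada--Watanabe theorem.

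\textbf{Your route.} You shift the part of $N$ above the curve $u=X^y_{s-}$ to get $N'$. You build $N''$ from the part below the curve, filled in above it by an independent copy. You then use the multivariate Watanabe characterization and represent strong solutions as functionals of the noise. This costs more machinery, but it proves more. It exhibits $X^x=X^y+Z$ with $X^y$ and $Z$ driven by explicitly independent Poisson noises. The argument extends verbatim to finitely many initial values, giving independent increments of $v\mapsto X^v$.

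\textbf{A correction in your comparison step.} For $Z_{s-}<0$ the jump of $Z$ is $-\xi$, so the relevant term is $D_\xi\phi_k(z):=\phi_k(z-\xi)-\phi_k(z)+\xi\phi_k'(z)$. A bound of the form $C|z|\int(\xi\wedge\xi^2)\,\nu(d\xi)$ is not uniform in $k$ for Yamada--Watanabe functions. What you actually get is
\[
|z|\,D_\xi\phi_k(z)\le\min\{\xi^2/k,\;2|z|\xi\}.
\]
This uses $\phi_k''(w)\le 2/(k|w|)$ and $|z-t\xi|\ge|z|$, since the jump moves $Z$ away from $0$. Integrating against $\nu$ gives
\[
k^{-1}\int_0^1\xi^2\,\nu(d\xi)+2Z_s^-\int_1^\infty\xi\,\nu(d\xi),
\]
and the first term vanishes as $k\to\infty$. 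The Gronwall argument then goes through as you describe.
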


A pair $\{(X_t,Y_t):t\geq0\}$ is called a {\it coupling} of the Markov process associated with a given transition probability if both $\{X_t:t\geq0\}$ and $\{Y_t: t\geq0\}$ are Markov processes governed by the same transition probability  (though possibly with different initial distributions).  In this case, $\{X_t:t\geq0\}$ and $\{Y_t: t\geq0\}$ are referred to as the {\it marginal processes} of the coupling.  The coupling $\{(X_t,Y_t):t\geq0\}$  is said to be {\it successful} if the {\it coupling time}
\[
 T:=\inf\{t\geq0:X_t=Y_t\}
\]
is almost surely finite.  A Markov process is said to have a {\it coupling property} (or to admit successful coupling),  if for any two initial distributions $\mu_1$ and $\mu_2$, there exists a successful coupling whose marginal processes start from $\mu_1$ and $\mu_2$, respectively. For a systematic treatment of coupling methods, we refer to Chen \cite{Chen04} and Lindvall \cite{L92}; see also Wang \cite{Wang12} for an excellent survey.

Using Theorem \ref{branching property} and condition \eqref{t2.1}, Li and Ma \cite{LM15} constructed a successful coupling for subcritical and critical CB and CBI processes and proved the following result.

\begin{thrm}\label{coupling1}(Li and Ma \cite[Theorem 2.5]{LM15}) Under  \eqref{t2.1} with $\beta\leq0$, the transition semigroup
$(P_t)_{t\ge 0}$ defined in \eqref{transition semigroup} possesses the strong Feller property.
Moreover, for any $t>0$ and $x,y\in\mbb{R}_+$,
\begin{align}\label{estimates1}
\big\|P_t(x,\cdot)-P_t(y,\cdot)\big\|_{\rm Var}\le 2(1-e^{-\bar{v}_t|x-y|}),
\end{align}
which tends to $0$ as $t\rightarrow\infty$. Here, $\|\cdot\|_{\rm Var}$ denotes the total variation norm. In this case, the CBI process admits a successful coupling.
\end{thrm}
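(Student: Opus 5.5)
Without loss of generality take $x\ge y$. The plan is to build the coupling directly from the stochastic equation \eqref{CBI SDE} driven by a single Poisson random measure $N$. Let $X^x_t$ and $X^y_t$ be the two strong solutions, and put $Z_t:=X^x_t-X^y_t$. By Theorem \ref{branching property}, $Z_t\ge0$ for all $t$ almost surely, and $\{Z_t\}$ is a CB process with branching mechanism $\Psi$ started from $x-y$. The pair $(X^x_t,X^y_t)$ is a coupling with marginals $P_t(x,\cdot)$ and $P_t(y,\cdot)$. Its coupling time $T=\inf\{t:X^x_t=X^y_t\}$ is exactly the extinction time $\tau_0$ of $Z$.

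Zero is absorbing for a CB process (the identity $\mathbb{P}(\tau_0\le t)=\mathbb{P}(X_t=0)$ recalled above). Hence $X^x_s=X^y_s$ for all $s\ge T$, and the standard coupling inequality gives
\[
\|P_t(x,\cdot)-P_t(y,\cdot)\|_{\rm Var}\le 2\,\mathbb{P}(X^x_t\ne X^y_t)=2\,\mathbb{P}(Z_t>0)=2\big(1-e^{-(x-y)\bar v_t}\big),
\]
using $\mathbb{P}_{z}(X_t=0)=e^{-z\bar v_t}$ for the CB process. This is \eqref{estimates1}. Under \eqref{t2.1}, $\bar v_t<\infty$ for $t>0$, so the bound is nontrivial.

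Since $\beta\le0$, the Proposition of Grey gives $\bar v=\lim_{t\to\infty}\bar v_t=0$, so the right-hand side tends to $0$. For the successful coupling with general initial laws $\mu_1,\mu_2$, I would draw $(X_0,Y_0)$ from any coupling of $\mu_1,\mu_2$, take $N$ independent of it, and run the same construction from $X_0\vee Y_0$ and $X_0\wedge Y_0$. Conditionally on the initial pair, $\mathbb{P}(T<\infty)=\lim_t e^{-|X_0-Y_0|\bar v_t}=1$ by Grey's result. Integrating over the initial pair shows $T<\infty$ almost surely, so the coupling is successful.

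For the strong Feller property, fix $t>0$ and a bounded measurable $f$. Then
\[
|P_tf(x)-P_tf(y)|\le \|f\|_\infty\|P_t(x,\cdot)-P_t(y,\cdot)\|_{\rm Var}\le 2\|f\|_\infty\big(1-e^{-\bar v_t|x-y|}\big).
\]
The right-hand side tends to $0$ as $y\to x$ because $\bar v_t<\infty$, so $P_tf$ is continuous.

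The one delicate point is that the coupling time of the pair equals the extinction time of $Z$ and that the processes stay merged afterwards. This rests on two facts: $Z$ is a genuine CB process (Theorem \ref{branching property}), and $0$ is a trap for it. Both are already available, so I expect no serious obstacle beyond carefully invoking them, and the measurable dependence of the construction on the random initial pair.
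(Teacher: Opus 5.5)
Your proposal is correct and follows the route the paper attributes to Li and Ma: drive both solutions by the same Poisson random measure so that $X^x-X^y$ is a CB process (Theorem \ref{branching property}), identify the coupling time with its extinction time, and then read off \eqref{estimates1}, the limit as $t\to\infty$, successful coupling and the strong Feller property from $\mathbb{P}_z(X_t=0)=e^{-z\bar v_t}$ and Grey's proposition. Your argument covers the paper's setting $\sigma=0$, $m\equiv0$; for general mechanisms $(\Psi,\Phi)$ the same argument goes through with the Dawson--Li equation, which adds the Gaussian and immigration-jump terms.
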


In the coupling construction described above, Condition \eqref{t2.1} is essential because, by Theorem \ref{branching property}, the coupling time is exactly the extinction time of the CB process $\{X^x_t-X^y_t:t\geq0\}$.
If \eqref{t2.1} is not satisfied, then the CB  process almost surely never hits $0$, meaning that $\bar{v}_t=\infty$ for all $t>0$, and thus \eqref{estimates1} becomes trivial. Furthermore, since $\bar{v}_t\leq e^{\beta(t-r)}\bar{v}_r$ for any $t\geq r>0$ (see, for example, Li \cite[Corollary 3.11]{Li11}), Li and Ma \cite{LM15} further proved exponential ergodicity in total variation for subcritical CBI processes under \eqref{t2.1}.

Unfortunately, these restrictions in Li and Ma \cite{LM15} exclude several relevant models, including Neveu's CB process with branching mechanism $\Psi_{Nev}(\lambda)=\lambda \log \lambda$, and the model of Li et al. \cite[Example 1.1]{LLWZ23} with branching L\'evy measure
\begin{align}\label{example of branching mech}
\nu(d\xi) = r \mathbf{1}_{(u,v)}(\xi) d\xi
\end{align}
for some $r > 0$ and $0 \leq u \leq v \leq 1$. The assumptions of Li and Wang \cite{LW20} likewise fail for the branching mechanism in  \eqref{example of branching mech}. This leads to the following question.

\smallskip

{\bf Question~1:} {\it Is \eqref{t2.1} necessary for the ergodicity of classical CBI processes?}

\smallskip

Additional results concerning the ergodic properties of CB-type processes have also been established. Among others, Bao and Wang \cite{BW23}, Chen and Li \cite{CL23}, and Jin et al. \cite{FJR20} proved exponential ergodicity via a coupling approach for affine processes, a natural extension of CBI processes. The Foster--Lyapunov criteria, developed by Meyn and Tweedie \cite{MT93}, were adopted in the work of Barczy et al. \cite{BDLP14} and Jin et al. \cite{JKR17} for affine processes. Roughly speaking, the Foster--Lyapunov criteria and coupling methods constitute two primary approaches for establishing ergodic properties of Markov processes. Applying the former requires irreducibility of certain skeleton chains as a key ingredient. In comparison, the coupling approach offers several advantages; notably, it allows explicit estimation of ergodic convergence rates. For nonlinear CB processes, Li and Wang \cite{LW20} obtained exponential ergodicity in both the $L^1$-Wasserstein and total variation distances. Li et al. \cite{LLWZ23} established a weighted total-variation estimate for CBI processes with competition.

It should be emphasized that all existing ergodicity proofs for CBI-type processes rely on subcriticality or on a comparable dissipativity condition. They therefore do not directly cover the classical critical CBI setting of the present paper. This motivates the second question.

\smallskip

{\bf Question~2}: {\it Can we obtain a coupling rate for critical CBI processes?}

\subsection{Main results}

To address these questions, we impose the following conditions on the branching L\'evy measure $\nu$. For $x_0\in\mathbb R$ and $\varepsilon>0$, let $B(x_0,\varepsilon)=\{x:|x-x_0|\leq\varepsilon\}$.
\begin{con1}\label{condition absolute}
There exist $x_0 \in \mathbb{R}_+$ and $\varepsilon > 0$ such that $B(x_0, \varepsilon) \subseteq (0, \infty)$, and the L\'evy measure $\nu$ has an absolutely continuous part in $B(x_0, \varepsilon)$, i.e.,
\begin{align}\label{negative density}
\mbf{1}_{B(x_0,\varepsilon)}(\xi)\nu(d\xi)=q(\xi)d\xi
\end{align}
for some non-negative function  $q\in L^\infty(B(x_0,\varepsilon))$ and
$$
\int_{B(x_0,\varepsilon)}q(\xi)^{-1}\,d\xi<\infty.
$$
\end{con1}

\begin{thrm}\label{main theorem}
Assume that $\beta\leq0$ and Condition \ref{condition absolute} holds. Then there is a constant $C_1>0$ such that, for all $x,y\in\mbb R_+$ and $t>1$, the following estimates hold:

\begin{enumerate}
\item[(i)] In the subcritical case $\beta<0$,
\[
\|P_t(x,\cdot)-P_t(y,\cdot)\|_{\rm Var}
\leq\frac{C_1(1+|x-y|)}{\sqrt{t}}.
\]
\item[(ii)] In the critical case $\beta=0$,
\[
\|P_t(x,\cdot)-P_t(y,\cdot)\|_{\rm Var}
\leq
\frac{C_1}{\sqrt{t}}
\left(1+|x-y|\int_0^t v_s(1)\,ds\right),
\]
where $s\mapsto v_s(1)$ is the solution of \eqref{2.2} with initial value
$v_0(1)=1$. Moreover,
\[
\lim_{t\to\infty}t^{-1/2}\int_0^t v_s(1)\,ds=0.
\]
\end{enumerate}
\end{thrm}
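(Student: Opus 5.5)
By Theorem \ref{branching property} it suffices to treat $x>y$: write $X^x_t=X^y_t+Z_t$, where $Z$ is a CB process started at $z=x-y$ and independent of $X^y$. The coupling is then built on the cluster (jump) structure of $Z$ rather than on its extinction time, which may be infinite when \eqref{t2.1} fails. The idea is to copy an Ornstein--Uhlenbeck coupling for jump noise. Split $N$ into its restriction $N_1$ to jumps $\xi\in B(x_0,\varepsilon)$ and the remainder. The $N_1$-part of the driving noise on $[0,t]$ has size comparable to $\int_0^t X_{s}\,ds$. Conditionally on everything else, including the number of $N_1$-atoms and their times and levels, the jump sizes are i.i.d.\ with density $q/\|q\|_{L^1}$ on $B(x_0,\varepsilon)$.

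\textbf{Step 1: cluster representation and the reference process.} Since $b>0$ and $\Phi(\lambda)=b\lambda$, I would write $X^y_t\ge Y_t$, where $Y$ is the CBI process started from $0$ and $X^y-Y$ is an independent CB process. Roughly speaking, $Y$ consists of clusters immigrating at rate $b$, each evolving as a CB process that starts from its first big jump. The comparison of $P_t(x,\cdot)$ and $P_t(y,\cdot)$ then reduces to the following: adding an independent perturbation $Z_t$ to a process whose law has a smoothing component should move the law by at most $C(1+\mathbb{E}[\text{something in }Z])/\sqrt t$ in total variation.

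\textbf{Step 2: the smoothing estimate.} Let $S_n=\xi_1+\dots+\xi_n$ be a sum of i.i.d.\ variables with density $p=q/\|q\|_1$ on $B(x_0,\varepsilon)$. The key lemma I would prove is
\begin{equation*}
\|\mathcal L(S_n+a)-\mathcal L(S_n)\|_{\rm Var}\le C(1+|a|)/\sqrt n .
\end{equation*}
This follows from a Mineka/Lindvall--Rogers type coupling of random walks, or from Fisher-information bounds. The condition $\int q^{-1}<\infty$ gives $\int (p')^2/p<\infty$ in a suitable regularized sense, and Fisher information of a sum of $n$ i.i.d.\ terms decays like $1/n$. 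Hence shifting by $a$ costs at most $|a|\sqrt{I(S_n)}\lesssim |a|/\sqrt n$. The shift is not deterministic, because the $N_1$-jumps also feed back into the intensity. So I would carry out the coupling pathwise: the two copies use the same jump times and levels, and their jump sizes are reflected or shifted one at a time until the discrepancy is absorbed. This is the usual one-dimensional coupling for Lévy-driven OU processes.

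\textbf{Step 3: counting jumps.} The number of $N_1$-atoms below level $Y_{s-}$ up to time $t$ is Poisson conditionally on $Y$, with mean $\|q\|_1\int_0^tY_s\,ds$. For $\beta<0$, $Y$ has a stationary limit with positive mean $b/|\beta|$. For $\beta=0$, $\mathbb{E}Y_s=bs$. In both cases $\int_0^t Y_s\,ds\ge ct$ with probability at least $1-C/\sqrt t$, by a second-moment or ergodic argument; if second moments are lacking, I would truncate the big jumps. On this event, $n\gtrsim t$ jumps are available.

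\textbf{Step 4: size of the discrepancy.} The perturbation to be absorbed is the total contribution of $Z$. In the subcritical case its size is controlled by $\mathbb{E}Z_t\le z$, which gives (i) with the factor $(1+|x-y|)$. In the critical case the mass of $Z$ does not decay, but $\mathbb{P}(Z\text{ is nontrivial})$ is controlled through Laplace functionals. Taking $\lambda=1$ and using \eqref{CB}, $\mathbb{E}[1-e^{-Z_s}]=1-e^{-zv_s(1)}\le zv_s(1)$, and integrating over time gives the factor $|x-y|\int_0^tv_s(1)\,ds$. For the final limit: in the critical case $v_s(1)\downarrow0$, so $t^{-1}\int_0^tv_s(1)\,ds\to0$. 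The stronger $t^{-1/2}$ rate needs $v_s(1)=o(s^{-1/2})$. I would derive this from $\Psi(\lambda)\ge c\lambda^2$ near $0$, which holds since $\nu$ charges $B(x_0,\varepsilon)$. Then $\dot v\le -cv^2$, so $v_s\le C/s$, $\int_0^tv_s\,ds=O(\log t)=o(\sqrt t)$.

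The main obstacle is Step 2 in the presence of state-dependent intensity. The jumps used for smoothing change the number of future jumps, so the clean i.i.d.\ sum argument must be replaced by a careful sequential coupling, which I would implement via reflection of each jump in $B(x_0,\varepsilon)$.
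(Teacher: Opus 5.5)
\textbf{Verdict: there is a genuine gap.} Your outer shell is sound and agrees with the paper. This covers the reduction to $x>y$ with $X^x=X^y+Z$, and the Laplace-functional estimate $\mathbb{E}[1-e^{-Z_s}]\le (x-y)v_s(1)$ that produces the factor $|x-y|\int_0^tv_s(1)\,ds$. It also covers the limit via $\Psi(u)\ge c_0u^2$ near $0$, which gives $\int_0^tv_s(1)\,ds=O(\log t)$. The gap is the core, Steps 1--3, which you yourself flag as the main obstacle and leave unresolved.

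\textbf{Why Steps 1--3 fail as written.}
\begin{itemize}
\item The i.i.d.\ picture you start from is false for \eqref{CBI SDE}. An atom $(s,u,\xi)$ of $N_1$ acts only if $u\le X_{s-}$, and $X_{s-}$ depends on the sizes of earlier acting atoms. So, given the times and levels of the acting atoms, their sizes are not i.i.d.
\item Conditionally on the path of a process that itself contains these jumps (your $Y$), their number is determined by the path. It is not Poisson with mean $\nu(B(x_0,\varepsilon))\int_0^tY_s\,ds$.
\item The discrepancy is not a static shift. A jump of size $\xi$ at time $s$ contributes a CB path $Y^{\xi}_{t-s}$, not $\xi$, to $X_t$. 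Hence a bound on $\|\mathcal{L}(S_n+a)-\mathcal{L}(S_n)\|_{\rm Var}$ says nothing directly about $P_t$.
\item Your fallback, a ``sequential reflection coupling'', is where the degeneracy of the branching noise bites. After a reflected or unmatched jump the copies may cross, and the discrepancy then evolves as a fresh random CB process. The times of later attempts also depend on sizes already used. There is therefore no random walk with i.i.d.\ steps for a Mineka-type bound to act on, and you offer no substitute.
\item The Fisher-information route is unavailable as well. The conditions $q\in L^\infty$ and $\int q^{-1}<\infty$ impose no smoothness on $q$, and $I(S_n)$ need not be finite for any $n$.
\end{itemize}

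\textbf{The missing idea.} You need a representation in which the smoothing jumps arrive at a rate that does not depend on them. Your Step 1 does not give one; with $\Phi(\lambda)=b\lambda$, immigration is a drift, not a stream of jumps. The paper obtains it from the cluster decomposition \eqref{CBI cluster}.
\begin{itemize}
\item Strip the $B(x_0,\varepsilon)$-jumps from the base process to get $\hat X$. Its drift $\hat\beta=\beta-\int_{B(x_0,\varepsilon)}\xi\,\nu(d\xi)$ is negative even when $\beta=0$.
\item Each such mother jump starts an independent cluster with law $\mathbb{Q}_{\xi}$. Conditionally on $\hat X$, the mother jumps form a Poisson process with intensity $\nu(B(x_0,\varepsilon))\hat X_{s-}$ and i.i.d.\ marks, with no feedback.
\item The Palm formula converts an extra mother cluster $(\tau,\zeta,\xi)$, drawn from the normalized intensity, into an average over the actual ones. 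One merges $\tilde Y=Y^x-Y^y$ into that cluster. By the branching property, $\xi+\tilde Y_{\tau+\cdot}$ is a CB path with law $\mathbb{Q}_{\xi(0)+\tilde Y_\tau}$, so the \emph{entire future} of the discrepancy is absorbed.
\item The cost is the likelihood ratio $q(r-z)/q(r)$ with $z=\tilde Y_\tau\le\varepsilon/2$. The event $\tilde Y_\tau>\varepsilon/2$ yields the error term $\hat A_T^{-1}\int_0^T\hat X_{s-}\mathbb{P}_{x-y}(Y_s>\varepsilon/2)\,ds$.
\item Averaging the ratios over the $J_T$ mother jumps gives $\hat{\mathbb{E}}(1-S_T)^2\le\sigma_\varrho/(\nu(B(x_0,\varepsilon))\hat A_T)$. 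Here $\sigma_\varrho<\infty$ needs only the chi-square bound $\int q(r-z)^2q(r)^{-1}\,dr\le\|q\|_{L^\infty}^2\int_{B(x_0,\varepsilon)}q^{-1}$. This is exactly where Condition \ref{condition absolute} enters.
\item The $t^{-1/2}$ rate then follows from $\mathbb{E}[\hat A_T^{-1/2}]\le K_0/\sqrt T$ and $\mathbb{E}[\hat A_T^{-1}]\le K_1/T$ (Proposition \ref{moment of 1/Y}), together with $\sup_s\mathbb{E}\hat X_s<\infty$. These replace your unproved claim that $\int_0^tY_s\,ds\ge ct$ with probability $1-C/\sqrt t$.
\end{itemize}
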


\begin{cor}
Suppose that $\beta=0$ and Condition \ref{condition absolute} holds. If, in addition,
$$
\int_0^1\frac{u}{\Psi(u)}\,du<\infty,
$$
the estimate in the critical case reduces to
\[
\|P_t(x,\cdot)-P_t(y,\cdot)\|_{\rm Var}
\leq\frac{C_1(1+|x-y|)}{\sqrt{t}}.
\]
\end{cor}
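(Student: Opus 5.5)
The plan is to show that, under the extra integrability assumption, the cumulant integral $\int_0^t v_s(1)\,ds$ appearing in Theorem \ref{main theorem}(ii) is bounded uniformly in $t$. The corollary then follows from part (ii) of that theorem after enlarging the constant.

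First I establish the monotonicity of $s\mapsto v_s(1)$. Since Condition \ref{condition absolute} forces $\nu\neq0$, and $e^{-\lambda\xi}-1+\lambda\xi>0$ for $\lambda,\xi>0$, we have $\Psi(\lambda)>0$ for every $\lambda>0$ when $\beta=0$. Also $\Psi(0)=0$, and $\Psi$ is locally Lipschitz on $(0,\infty)$. So $0$ is an equilibrium of \eqref{2.2}, and by uniqueness the solution started at $v_0(1)=1$ never reaches $0$ in finite time. Hence $s\mapsto v_s(1)$ is strictly decreasing and takes values in $(0,1]$. Moreover $\partial_s v_s(1)=-\Psi(v_s(1))<0$, so it is a $C^1$ diffeomorphism from $[0,t]$ onto $[v_t(1),1]$.

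Next I change variables with $u=v_s(1)$, so that $ds=-du/\Psi(u)$. This gives
\[
\int_0^t v_s(1)\,ds=\int_{v_t(1)}^{1}\frac{u}{\Psi(u)}\,du\le \int_0^1\frac{u}{\Psi(u)}\,du=:K<\infty ,
\]
where the inequality uses only that the integrand is nonnegative on $(0,1]$.

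Substituting this into Theorem \ref{main theorem}(ii) yields, for all $t>1$,
\[
\|P_t(x,\cdot)-P_t(y,\cdot)\|_{\rm Var}\le \frac{C_1}{\sqrt t}\bigl(1+K|x-y|\bigr)\le \frac{C_1\max\{1,K\}(1+|x-y|)}{\sqrt t}.
\]
Renaming $C_1\max\{1,K\}$ as $C_1$ gives the claim. The only step needing care is the justification of the substitution: $v_s(1)$ must stay strictly positive and strictly monotone, which is exactly the positivity of $\Psi$ on $(0,\infty)$ argued above. The rest is immediate.
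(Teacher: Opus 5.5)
Your proposal is correct and follows the paper's own proof: the substitution $u=v_s(1)$ gives $\int_0^t v_s(1)\,ds=\int_{v_t(1)}^1 u\,\Psi(u)^{-1}\,du\le\int_0^1 u\,\Psi(u)^{-1}\,du$, and this bound is inserted into Theorem~\ref{main theorem}(ii). One small fix: the uniqueness argument that $v_s(1)$ never reaches the equilibrium $0$ needs $\Psi$ to be Lipschitz near $0$, not just locally Lipschitz on $(0,\infty)$; this holds because $\Psi$ is $C^1$ on $[0,\infty)$ with $\Psi'(0)=-\beta$, and the final bound would hold even if $v$ did reach $0$.
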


\proof  Note that
\[
\frac{d v_s(1)}{d s}=-\Psi(v_s(1)),\quad v_0(1)=1.
\]
By a change of variables, we have
\begin{align}\label{change of vari}
t=\int_{v_t(1)}^1\frac{du}{\Psi(u)},\qquad \int_0^t v_s(1)\,d s=\int_{v_t(1)}^1\frac{u}{\Psi(u)}\, du.
\end{align}
By \eqref{2.2}, \(s\mapsto v_s(1)\) is decreasing and $\lim_{s\to\infty}v_s(1)=0$.
This implies that
\begin{align}\label{change of vari of v}
\sup_{t}\int_0^t v_s(1)\,d s=\int_0^\infty v_s(1)\, ds=\int_0^1\frac{u}{\Psi(u)}\,du.
\end{align}
This proves the corollary.
\qed

The following examples give the
explicit upper bounds for some critical CBI processes.

\begin{exam}\label{stable sharp rate}
Let $1<\alpha<2$, $c>0$, and consider the critical CBI process with
\beqlb\label{stable mechanism}
\Psi(\lambda)=c\lambda^\alpha,\qquad \Phi(\lambda)=b\lambda,\qquad b>0.
\eeqlb
Then Condition \ref{condition absolute} holds.  Moreover,  there is a constant $C_2>0$ such that the following bound holds for all $x,y\in\mbb R_+$ and $t>0$,
\[
\|P_t(x,\cdot)-P_t(y,\cdot)\|_{\rm Var}\leq
C_2|x-y|t^{-1/(\alpha-1)}.
\]
\end{exam}

\proof
In the case of \(\Psi(\lambda)=c\lambda^\alpha\), one sees that
\[
\nu_\alpha(d\xi)=\frac{c\alpha(\alpha-1)}{\Gamma(2-\alpha)}\xi^{-1-\alpha}\,d\xi.
\]
This density is positive on every compact subinterval of $(0,\infty)$, so Condition \ref{condition absolute} holds. The cumulant integral is finite since
\[
\int_0^1\frac{u}{\Psi(u)}\,du
=\frac{1}{c}\int_0^1u^{1-\alpha}\,du
=\frac{1}{c(2-\alpha)}<\infty.
\]
On the other hand, the solution of \eqref{2.2} is
\begin{align}\label{stable cumulant}
v_t(\lambda)=\left(\lambda^{1-\alpha}
+c(\alpha-1)t\right)^{-1/(\alpha-1)},
\end{align}
and hence
\[
\bar v_t=\lim_{\lambda\to\infty}v_t(\lambda)
=\left(c(\alpha-1)t\right)^{-1/(\alpha-1)}.
\]
The explicit cumulant also verifies Grey's condition \eqref{t2.1}. Hence estimate \eqref{estimates1} applies, and
\[
\|P_t(x,\cdot)-P_t(y,\cdot)\|_{\rm Var}
\leq2(1-e^{-|x-y|\bar v_t})
\leq2|x-y|\left(c(\alpha-1)t\right)^{-1/(\alpha-1)}.
\]
This proves the asserted rate.\qed

\begin{exam}
Let \(b>0\) and consider a critical CBI process with mechanisms
\[
\Psi(\lambda)=\lambda\log(1+\lambda),
\qquad \Phi(\lambda)=b\lambda.
\]
Then there is a constant $C_3>0$ such that the following bound holds for all $x,y\in\mbb R_+$ and $t>1$,
\[
\|P_t(x,\cdot)-P_t(y,\cdot)\|_{\rm Var}
\leq
\frac{C_3}{\sqrt{t}}
\Big(1+|x-y|\log(1+t)\Big).
\]
\end{exam}

\proof The function
\(\Psi\) is a branching mechanism of the form
\[
\Psi(\lambda)=\int_0^\infty
\big(e^{-\lambda\xi}-1+\lambda\xi\big)\nu(d\xi),\quad \nu(d\xi)=\frac{(1+\xi)e^{-\xi}}{\xi^2}\,d\xi, \quad \lambda\ge0.
\]
The density of \(\nu\) is strictly positive and continuous on every compact interval of \((0,\infty)\). Hence Condition \ref{condition absolute} holds. However, Grey's condition fails.

For any $\varepsilon\in(0,1)$, there exists $\delta>0$ such that for any $u\in(0,\delta)$,
\[
1-\varepsilon\le \frac{u}{\log(1+u)}\le 1+\varepsilon.
\]
Note that $\lim_{t\to\infty}v_t(1)=0$. Then for $v_t(1)\in(0,\delta)$,
\[
(1-\varepsilon)\int_{v_t(1)}^\delta\frac{du}{u^2}\le
\int_{v_t(1)}^\delta \frac{du}{\Psi(u)}\le(1+\varepsilon)\int_{v_t(1)}^\delta\frac{du}{u^2}.
\]
Together with \eqref{change of vari}, this implies that
\[
(1-\varepsilon)\left(\frac{1}{v_t(1)}-\frac{1}{\delta}\right)\le t-\int_\delta^1\frac{du}{\Psi(u)}
\le (1+\varepsilon)\left(\frac{1}{v_t(1)}-\frac{1}{\delta}\right),
\]
and so
\[
\lim_{t\to\infty}tv_t(1)=1.
\]
By a similar argument, one can also obtain from the identity \eqref{change of vari} that
\[
\lim_{t\to\infty}\frac{\int_0^t v_s(1)ds}{\log(1/v_t(1))}=1.
\]
Thus
\[
\lim_{t\to\infty}\frac{1}{\log t}\int_0^t v_s(1)ds=1,
\]
which implies the desired result.\qed

We now focus on the exponential ergodicity. Write $\delta_x$ for the Dirac measure at $x$, and use $*$ for convolution.

\begin{con1}\label{condition basic}
There exist $x_0 \in \mathbb{R}_+$ and $\varepsilon > 0$ such that $B(x_0, \varepsilon) \subseteq (0, \infty)$ and
\begin{align}\label{expon cond}
K_{x_0, \varepsilon}:=\sup_{x}|x|^{-1}
\|\bar{\nu}_{x_0, \varepsilon}\ast\delta_x-\bar{\nu}_{x_0, \varepsilon}\|_{\rm Var}<\infty
\end{align}
where $\bar{\nu}_{x_0, \varepsilon}(\cdot)=\frac{\nu(\cdot\cap B(x_0, \varepsilon))}{\nu(B(x_0, \varepsilon))}$.
\end{con1}

\begin{thrm}\label{main theorem2}
Assume that $\beta<0$ and Condition \ref{condition basic} holds. Then there are constants $C_2,\lambda>0$ such that, for all $x,y\in\mbb R_+$ and $t>0$,
\[
\|P_t(x,\cdot)-P_t(y,\cdot)\|_{\rm Var}\leq C_2(1+|x-y|)e^{-\lambda t}.
\]
\end{thrm}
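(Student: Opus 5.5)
We plan to reduce to the CB difference process, as in Theorem \ref{coupling1}, but with a coupling that exploits the jumps in $B(x_0,\varepsilon)$ rather than extinction. Take $x\ge y$ and realise $X^y$ and $X^x=X^y+Z$ on one probability space, where $Z=X^x-X^y$ is a CB process with mechanism $\Psi$ started at $z=x-y$, independent of $X^y$ (Theorem \ref{branching property}). To couple with a copy $\tilde X^y$ that has the law of $X^y$, it suffices to couple $X^y_t+Z_t$ with $X^y_t$ at a fixed time. Since $Z$ need not hit $0$ (Grey's condition is not assumed), the mismatch $Z_t$ has to be absorbed by the big jumps of $X^y$ of size in $B(x_0,\varepsilon)$.

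The key steps are as follows.

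\emph{Step 1: decay of the mismatch.} Subcriticality gives $\mathbb E[Z_t]=ze^{\beta t}$. Hence, by Markov's inequality, $\mathbb P(Z_t>\eta)\le z e^{\beta t}/\eta$.

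\emph{Step 2: decomposition of $X^y$.} Split the Poisson random measure $N$ into the part with $\xi\in B(x_0,\varepsilon)$ and the rest. On a unit time window $[t-1,t]$, conditionally on the path of $X^y$, the number of jumps with size in $B(x_0,\varepsilon)$ is mixed Poisson with intensity $\nu(B(x_0,\varepsilon))\int_{t-1}^t X^y_{s-}ds$. Their sizes are i.i.d.\ with law $\bar\nu_{x_0,\varepsilon}$.

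\emph{Step 3: one-jump coupling (Mineka/Lindvall type).} Condition on the event that at least one such jump occurs in $[t-1,t]$, say at time $\tau$. Replace the jump size $\xi$ in one copy by $\xi'$, coupled with $\xi$ so that $\xi'=\xi+Z_{\tau}$ with probability at least $1-\|\bar\nu\ast\delta_{Z_\tau}-\bar\nu\|_{\rm Var}/2\ge 1-K_{x_0,\varepsilon}Z_\tau/2$, using Condition \ref{condition basic}. On this event the two processes $X^y+Z$ (with the jump $\xi$) and $\tilde X^y$ (with the jump $\xi'$) agree at time $\tau$. After $\tau$ they are driven by the same noise, so they stay equal by pathwise uniqueness.

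Two points need care here.

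First, the replaced process must still have the correct law. The jump-size law is changed only on an event, and maximal coupling of $\bar\nu$ with $\bar\nu\ast\delta_{Z_\tau}$ preserves the marginal. However, $Z_\tau$ depends on the branching noise of $X^x$ driven by the same $N$. To avoid this, we run $Z$ with an independent Poisson measure, which is allowed since only the law of $X^y+Z$ matters.

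Second, the jump occurs at rate proportional to $X^y_{s-}$. Because $b>0$ and $\beta<0$, the process $X^y$ is ergodic with $\mathbb E X^y_s\ge b(1-e^{\beta s})/|\beta|$. We need $\mathbb P(\int_{t-1}^t X^y_s ds<\delta)\le c<1$ uniformly in $y$ and large $t$. This should follow from the drift $b$: on any unit interval, $X^y\ge$ a deterministic lower bound minus the compensated small jumps. Alternatively, use $X^y\ge X^0$ by Theorem \ref{branching property} together with the stationary law of $X^0$ charging $(0,\infty)$.

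\emph{Step 4: iteration for an exponential rate.} The failure probability on a window is at most $\rho<1$ plus a term of order $\mathbb E Z\wedge 1\lesssim ze^{\beta s}$. Repeating over the windows $[k,k+1]$, $k\ge t/2$, using the Markov property, gives a failure probability bounded by $\rho^{t/2}+Cz e^{\beta t/2}$. This yields $C_2(1+|x-y|)e^{-\lambda t}$. The bound for $t\le 1$ is trivial since the total variation distance is at most $2$.

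The main obstacle is Step 3: making the jump-replacement coupling rigorous while preserving the SDE law. The difficulty is the dependence between $Z$ and the intensity of $X^y$. It also requires controlling the mismatch $Z$ accumulated before the coupling jump, because each failed attempt costs a fresh $K_{x_0,\varepsilon}Z_\tau$. Our first approach would be to build the coupling on a time-discretised Markov chain $(X^y_k,Z_k)$ and apply the total variation bound of Condition \ref{condition basic} to the one-step transition kernel. This reduces the argument to a standard Doeblin/coupling iteration, with a small set of the form $\{y\le R\}$ obtained from the Lyapunov function $V(y)=1+y$, for which $P_1V\le e^{\beta}V+b$.
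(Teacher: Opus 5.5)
Your ingredients are the right ones: synchronous coupling to shrink the mismatch ($\mathbb E Z_t=ze^{\beta t}$), a maximal coupling of jump sizes at a jump with size in $B(x_0,\varepsilon)$ controlled by Condition~\ref{condition basic}, and a waiting-time bound for such a jump coming from $b>0$. But the proposal does not close, and the route you name for closing it would not give the stated estimate.

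The obstacle you flag, controlling the mismatch after failed attempts iterated over windows, is avoidable: make exactly one attempt. This is how the paper finishes. At the first mother jump $\tau_1$ one gets, with no contraction at all, $\|P_T(a,\cdot)-P_T(b,\cdot)\|_{\rm Var}\le 2\mathbb P(\tau_1>T)+K_{x_0,\varepsilon}|a-b|$. Then apply the Markov property at time $T$ to the synchronous pair $X^x_T\ge X^y_T$, which has $\mathbb E(X^x_T-X^y_T)=(x-y)e^{\beta T}$. This gives the bound $2C_\varepsilon e^{-\kappa_\varepsilon T}+K_{x_0,\varepsilon}|x-y|e^{\beta T}$ at time $2T$. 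Your target $\rho^{t/2}+Cze^{\beta t/2}$ is precisely what this one-shot scheme yields. A genuine multi-attempt scheme would instead have to track post-failure mismatches $Z_{\tau-}+\xi-\xi'$ of order $2\varepsilon$ and of either sign. Your fallback, Doeblin/Harris with $V(y)=1+y$ and small sets $\{y\le R\}$, produces a prefactor of order $1+x+y$, not $1+|x-y|$. So it proves something weaker than the theorem, whose constant is uniform in the location of $x,y$.

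Second, Step~2 is incorrect as written. Conditionally on the whole path of $X^y$, its jumps of size in $B(x_0,\varepsilon)$ are determined. Their rate $\nu(B(x_0,\varepsilon))X^y_{s-}$ also feeds back through those very jumps. So the count is not mixed Poisson with that intensity, and a lower bound on $\int_{t-1}^tX^y_s\,ds$ is not the quantity that matters.

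The paper's cluster decomposition \eqref{CBI cluster} is exactly the device that fixes this. Mother jumps arrive at rate $\nu(B(\varepsilon))\hat X_{s-}$, where $\hat X$ has those jumps removed, starts at $0$, and is independent of $Y^x,Y^y$, hence of $\tilde Y=Y^x-Y^y$. Filipovi\'c's formula then gives $\mathbb P(\tau_1>T)=\exp[-b\int_0^T\hat V_s(\nu(B(\varepsilon)))\,ds]\le C_\varepsilon e^{-\kappa_\varepsilon T}$, uniformly in $x,y$.

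If you prefer to keep working with $X^y$ itself, kill it at the first such jump. Before that jump, $X^y$ follows a CBI $\check X$ with mechanisms $(\hat\Psi,\Phi)$ started from $X^y_s$, which is driven only by the atoms of $N$ outside $B(x_0,\varepsilon)$. Hence $\mathbb P(\text{no such jump in }[s,s+h]\mid\mathcal F_s)=\mathbb E\bigl[\exp\{-\nu(B(\varepsilon))\int_0^h\check X_r\,dr\}\bigr]\le\exp\{-b\int_0^h\hat V_r(\nu(B(\varepsilon)))\,dr\}$, again uniformly in the state.
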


\begin{remark}
We make some comments on Theorem \ref{main theorem} and Theorem \ref{main theorem2}.

(a). Condition \ref{condition absolute} is inspired by Wang \cite{wang11} and formulates a similar hypothesis.  \cite{wang11} considered a $d$-dimensional Ornstein--Uhlenbeck process $\{Z_t:t\ge0\}$ determined by the following SDE:
\begin{align}\label{equation: OU}
dZ_t = AZ_t dt + BdL_t,
\end{align}
where $L=\{L_t:t\ge0\}$ is a L\'evy jump process with L\'evy measure $\mu$. By imposing an assumption on $\mu$ analogous to Condition \ref{condition absolute},  \cite{wang11} proved the successful coupling of the solution to \eqref{equation: OU} in the (sub)critical case (i.e., $\langle Ax, x \rangle \le 0$ for all $x\in\mathbb{R}^d$). Our approach for proving Theorem \ref{main theorem} builds on the coupling methods developed in \cite{wang11}. However, there are several essential differences between CBI processes and Ornstein--Uhlenbeck processes. For instance, Ornstein--Uhlenbeck processes require the driving noise to be a L\'evy process with non-degenerate coefficients, whereas the coefficients in the branching setting are degenerate on $\mathbb{R}_+$. This degeneracy presents challenges in directly applying the coupling techniques from \cite{wang11}. To address this, we first derive cluster representations of CBI  processes, decomposing their sample paths into three independent components:
$$
X^x_t=Y^x_t+\hat{X}_t+\tilde{X}_t,\quad \forall t\geq0,
$$
as detailed in \eqref{CBI cluster}. This decomposition reflects a branching structure where each cluster process is initiated by a ``mother jump'' followed by ``child jumps'', a structure widely used in mathematical finance; see, e.g., Jiao et al. \cite{JMSZ21}. Second, leveraging the independence of these components, we analyze ergodicity under a probability measure conditioned on $\{\hat{X}_t : t \geq 0\}$, which can be interpreted as a time-inhomogeneous immigration. Finally, we establish the main result through further estimates of negative moments of CBI processes.

(b). Condition \ref{condition basic} is inspired by the framework in Wang \cite[Theorem 1, formula (7)]{WangJ12}, which studied the SDE \eqref{equation: OU} with $B$ taken as the identity matrix. Given $\sigma$-finite measures $\nu_1$ and $\nu_2$ on $\mbb{R}_+$, we write $\nu_1\land \nu_2 := \nu_1 - (\nu_1-\nu_2)^+ = \nu_2 - (\nu_2-\nu_1)^+$, where the subscript ``$+$'' stands for the upper variation of the signed measure in its Jordan decomposition. According to \cite[Remark 1]{WangJ12}, \eqref{expon cond} implies
$$
\inf_{|x|\le x_0}\bar{\nu}_{x_0, \varepsilon}\wedge(\delta_x\ast\bar{\nu}_{x_0, \varepsilon})(\mathbb{R}_+)>0.
$$
This lower bound guarantees a positive chance of simultaneous jumps in the two marginals, as in Schilling and Wang \cite{SW11}. The cluster decomposition of CBI processes is likewise central to the proof of exponential ergodicity.
We also point out that Theorem \ref{main theorem2} remains valid when \eqref{expon cond} is replaced by Condition \ref{condition absolute}, with the additional requirement that
\[
\limsup_{r\rightarrow0}r^{-1}\sup_{|x|\leq r}\int_{B(x_0,\varepsilon)}|q(\xi)-q(x+\xi)|d\xi<\infty.
\]
\end{remark}

The strong Feller property  of one-dimensional CBI processes was established in
Li and Ma \cite{LM15} under Condition \eqref{t2.1}. The analytic properties of a finite-dimensional stable jump-type CBI
process were studied by Friesen and Jin \cite{FJ20}, who proved that the transition kernel of the process satisfies an a priori bound in a weighted anisotropic Besov norm. From this regularity they
deduced the strong Feller property and proved in the subcritical case the exponential ergodicity
in the total variation distance. To prove the strong Feller property in our setting, let us consider the following condition.

\bgcondition\label{condition strong feller}
Condition \ref{condition basic} holds for a sequence $\{\varepsilon_n\}\subset(0,1]$ such that $\nu(B(x_0,\varepsilon_n))\to\infty$ as $n\rightarrow\infty$.
\edcondition

\begin{thrm}\label{main theorem3}
If $\beta<0$ and Condition \ref{condition strong feller} holds, then $(P_t)_{t\ge0}$ is strong Feller.
\end{thrm}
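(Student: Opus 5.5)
The plan is to show that for every fixed $t>0$ and $x\in\mathbb R_+$,
\[
\lim_{y\to x}\|P_t(x,\cdot)-P_t(y,\cdot)\|_{\rm Var}=0 .
\]
This gives the strong Feller property, because $|P_tf(x)-P_tf(y)|\le\|f\|_\infty\|P_t(x,\cdot)-P_t(y,\cdot)\|_{\rm Var}$ for bounded measurable $f$.

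Fix $x\le y$. By Theorem \ref{branching property} we write $X^y_t=X^x_t+Z_t$, where $Z$ is a CB process started at $h=y-x$ and independent of $X^x$. The jumps of $X^x$ are then split according to the cluster representation \eqref{CBI cluster}. For a fixed $n$, the jumps whose size lies in $B(x_0,\varepsilon_n)$ are isolated; they form a compound Poisson ``mother jump'' component with state-dependent rate $\nu(B(x_0,\varepsilon_n))X_{s-}$ and jump law $\bar\nu_{x_0,\varepsilon_n}$.

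The coupling is built as follows. We condition on the remaining components; as in the proof of Theorem \ref{main theorem2}, they play the role of a time-inhomogeneous immigration. On the event that at least one mother jump occurs in $[0,t]$, we couple the size $\xi$ of the first such jump in the $x$-copy with the size $\xi'$ in the $y$-copy, so that $\xi'=\xi-d$, where $d$ is the discrepancy accumulated up to that time. Condition \ref{condition basic} makes this maximal coupling fail with probability at most
\[
\tfrac12\|\bar\nu_{x_0,\varepsilon_n}*\delta_{d}-\bar\nu_{x_0,\varepsilon_n}\|_{\rm Var}\le K_{x_0,\varepsilon_n}|d|/2 .
\]
After a successful match the two paths agree, and from then on they are driven by the same noise. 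The discrepancy $d$ is controlled by $Z$; by the first moment formula $\mathbb E Z_s=he^{\beta s}\le h$, since $\beta<0$. We must also make the intensities of the mother jumps agree, and the rates differ by at most $\nu(B(x_0,\varepsilon_n))Z_s$. This yields an estimate of the form
\[
\|P_t(x,\cdot)-P_t(y,\cdot)\|_{\rm Var}\le \mathbb P(\text{no mother jump in }[0,t])+C\big(K_{x_0,\varepsilon_n}+t\,\nu(B(x_0,\varepsilon_n))\big)h .
\]
Letting $h\to0$ for fixed $n$ leaves only the first term.

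The remaining step is to make the probability of no mother jump small by choosing $n$ large, using $\nu(B(x_0,\varepsilon_n))\to\infty$. This probability is $\mathbb E\exp\{-\nu(B(x_0,\varepsilon_n))\int_0^tX^x_sds\}$. Since $b>0$, the drift makes $\int_0^tX^x_sds>0$ almost surely for every $t>0$, even when $x=0$. Hence, by dominated convergence, this probability tends to $0$ as $n\to\infty$. Taking first $h\to0$ and then $n\to\infty$ gives the claim.

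I expect the main obstacle to be the second-order term involving $K_{x_0,\varepsilon_n}$ and the rate mismatch, because both may blow up as $n\to\infty$. That is why the limits are taken in the order $h\to0$ first, then $n\to\infty$. A second delicate point is that the mother-jump rate depends on the state, so matching the two copies needs a thinning (``$u$-coordinate'') coupling. I would try first to reuse, for each fixed $n$, the conditional coupling construction from the proof of Theorem \ref{main theorem2}, rather than rebuilding it from scratch.
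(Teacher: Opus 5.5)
Your fixed-$n$ coupling is a reasonable variant of the paper's. In the paper the mother jumps are driven by the base process $\hat X$, which both copies share, so no rate-mismatch term appears; your extra $t\,\nu(B(x_0,\varepsilon_n))h$ is harmless because $h\to0$ first. The gap is in the final step. The identity
\[
\mathbb P(\text{no mother jump in }[0,t])=\mathbb E\exp\Big\{-\nu(B(x_0,\varepsilon_n))\int_0^tX^x_s\,ds\Big\}
\]
is false. The intensity $\nu(B(x_0,\varepsilon_n))X^x_{s-}$ is itself changed by the mother jumps, so you cannot treat it as an exogenous Cox intensity. On the no-jump event, $X^x$ coincides on $[0,t]$ with the process $\hat X^{(n)}$ from which the $B(x_0,\varepsilon_n)$-jumps have been removed. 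The correct formula is therefore $\mathbb E\exp\{-\alpha_n\int_0^t\hat X^{(n)}_s\,ds\}$ with $\alpha_n=\nu(B(x_0,\varepsilon_n))$; here $\hat X^{(n)}$ starts at $x$ in your set-up and at $0$ in the paper's cluster version.

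Since $\hat X^{(n)}$ is dominated by $X^x$, your expression is smaller than the true probability, so it bounds it from the wrong side. More seriously, $\hat X^{(n)}$ depends on $n$ through the compensating drift $-\big(\int_{B(x_0,\varepsilon_n)}\xi\,\nu(d\xi)\big)\hat X^{(n)}_s$. Because $\nu$ is finite away from $0$, Condition \ref{condition strong feller} forces $x_0-\varepsilon_n\to0$. This damping coefficient then tends to $\int_{(0,2x_0)}\xi\,\nu(d\xi)$, which is infinite, for instance, when $\nu(d\xi)=\xi^{-1-a}d\xi$ with $1<a<2$. Dominated convergence with a fixed integrand therefore proves nothing here; you have to show that the growth of $\alpha_n$ beats the growing damping.

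That is exactly what the paper's proof does. It writes $\mathbb P(\tau_1^{(n)}>t)=\exp[-b\int_0^t\hat V^{(n)}_s(\alpha_n)\,ds]$ and compares $\hat V^{(n)}$ with a Riccati solution, using $\hat\Psi_n(\lambda)\le[c_*+\int_{B(x_0,\varepsilon_n)}\xi\,\nu(d\xi)]\lambda+c_{**}\lambda^2$. The key observation $\alpha_n^{-1}\int_{B(x_0,\varepsilon_n)}\xi\,\nu(d\xi)\to0$ then gives $\kappa_{\varepsilon_n}=b\alpha_n/D_{\alpha_n}\to\infty$ with $C_{\varepsilon_n}$ bounded.

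In your own formulation, where the mother jumps are all $B(x_0,\varepsilon_n)$-jumps of the fixed path $X^x$, a cheaper qualitative repair is available. Pass to a subsequence with $\varepsilon_n\uparrow x_0$, so that the balls are nested. The events that $X^x$ has no jump with size in $B(x_0,\varepsilon_n)$ during $[0,t]$ then decrease to the event that it has no jump with size in $(0,2x_0)$. That event is null, because its compensator $\nu((0,2x_0))\int_0^tX^x_s\,ds$ is a.s.\ infinite (using $b>0$). When $y\uparrow x$ you also need this bound uniformly in the smaller starting point. That follows by comparison with $X^0$, since under the common-noise construction every jump of $X^0$ is a jump of $X^y$. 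As written, however, the last step of your argument does not go through.
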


\section{The cluster decomposition of CBI processes}

Either Condition \ref{condition absolute} or Condition \ref{condition basic} implies $x_0-\varepsilon>0$, so $B(x_0,\varepsilon)$ is bounded away from zero and $\nu(B(x_0,\varepsilon))<\infty$. We next construct a jump-cluster decomposition of the CBI process. Each cluster begins with a ``mother jump'' and evolves thereafter as a CB process.

We first remove jumps whose sizes lie in $B(x_0,\varepsilon)$ and use the resulting process as the base component:
\begin{align}\label{CBI_hat}
\hat{X}_t=\int_0^t (b+\hat{\beta}\hat{X}_s)ds+\int_0^t\int_0^{\hat{X}_{s-}}\int_{B^c(x_0,\varepsilon)}\xi\,\tilde{N}(ds,du,d\xi),\quad t\geq0,
\end{align}
where $\hat{\beta}=\beta-\int_{B(x_0,\varepsilon)}\xi\,\nu(d\xi)<0$.
It is straightforward to see that the process $\{\hat X_t:t\geq0\}$ is a CBI process with mechanisms $(\hat\Psi,\Phi)$, where
\begin{align}\label{mechanism of hat X}
\hat{\Psi}(\lambda) = -\hat{\beta}\lambda+ \int_{B^c(x_0,\varepsilon)}(e^{-\lambda \xi}-1+\lambda\xi)\nu(d\xi),\quad \lambda\geq0.
\end{align}
Let $N_0(ds,du,d\xi)$ be an independent copy of $N(ds,du,d\xi)$. Then we define a flow of CB processes with branching mechanism $\Psi$, as the initial cluster process, by
\[
Y^x_t=x+\beta \int_0^t Y^x_sds+\int_{0}^t\int_{0}^{Y^x_{s-}}\int_0^\infty \xi\,\tilde{N}_0(ds,du,d\xi), \quad t\geq0,\;x\geq0.
\]
Theorem \ref{branching property} shows that, for $x\ge y\ge0$, the difference $Y^x-Y^y$ is a CB process started from $x-y$ and is independent of $Y^y$.

Let $\mathbb{D}[0,\infty)$ be the space of c\`{a}dl\`{a}g paths $t\mapsto w(t)$ from
$[0,\infty)$ to $[0,\infty)$. On this space, define the $\sigma$-algebra  as $\mathcal{A} =\sigma\{w(s): 0\leq s\leq\infty\}$ and $\mathcal{A}_t =\sigma\{w(s): 0\leq s\leq t\} $ for $t\geq0$. Let $\mathbb{Q}_x(dw)$ denote the law of the process $\{Y^x_t:t\geq0\}$ with initial value $x$ on
$({\mathbb D}[0,\infty), \mathcal{A})$. To introduce a sequence of cluster processes, we need to define the cluster measure
$\rho(dw)$ on $({\mathbb D}[0,\infty), \mathcal{A})$ by
 \begin{align}\label{excursion}
\rho(dw)=\int_{B(x_0,\varepsilon)} \nu(dx)\mathbb{Q}_x(dw).
 \end{align}
Then we have the following cluster decomposition for CBI processes with mechanisms ($\Psi,\Phi$) .

\begin{prop}\label{cluster decomposition}
Let $M(ds,du,dw)$ be a Poisson random measure on $(0,\infty)^2\times {\mathbb D}[0,\infty)$ with intensity measure
$dsdu\rho(dw)$ by \eqref{excursion}. Suppose that $M$, $N$ and $N_0$ are independent of each other.
For $t\geq0$ define
\begin{align}\label{CBI cluster}
X^x_t=Y^x_t+\hat{X}_t+\int_0^t\int_0^{\hat{X}_{s-}}\int_{{\mathbb D}} w(t-s) M(ds,du,dw).
\end{align}
Then $\{X^x_t: t\geq0\}$ is a CBI process with initial value $x$ and mechanisms given by \eqref{mechanism}.
\end{prop}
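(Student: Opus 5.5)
The plan is to identify the law of $\{X^x_t\}$ through its finite-dimensional Laplace functionals. Because the semigroup is characterized by \eqref{transition semigroup}, it suffices to show two things: that the process is Markov with respect to a suitable filtration, and that for every $t\ge r\ge 0$ and $\lambda\ge0$,
\[
\mathbb E\big[e^{-\lambda X^x_t}\,\big|\,\mathcal F_r\big]=\exp\Big\{-X^x_r v_{t-r}(\lambda)-\int_0^{t-r}b\,v_s(\lambda)\,ds\Big\}.
\]
For simplicity I first do the case $r=0$, which computes $\mathbb E e^{-\lambda X^x_t}$. The general case follows the same way once I use the flow/branching property of $Y$ (Theorem \ref{branching property}) and the Markov property of the Poisson measures after time $r$. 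Specifically, each cluster $w$ started at time $s<r$ continues after $r$ as a CB process from $w(r-s)$. The clusters started after $r$ are then generated by $\hat X$ restricted to $[r,\infty)$.

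The first step is to condition on $\hat X$, which is independent of $M$ and $N_0$. Three facts drive the computation:
\begin{itemize}
\item $Y^x$ is a CB process with $\mathbb E e^{-\lambda Y^x_t}=e^{-xv_t(\lambda)}$, by \eqref{CB}.
\item Given $\hat X$, the cluster term is a Poisson integral, so Campbell's formula applies.
\item The clusters have $\mathbb Q_z$-marginals.
\end{itemize}
Together these give
\[
\mathbb E\Big[\exp\Big\{-\lambda\int_0^t\int_0^{\hat X_{s-}}\int_{\mathbb D} w(t-s)M(ds,du,dw)\Big\}\Big|\hat X\Big]
=\exp\Big\{-\int_0^t\hat X_{s}\int_{B(x_0,\varepsilon)}\big(1-e^{-z v_{t-s}(\lambda)}\big)\nu(dz)\,ds\Big\}.
\]
We therefore need the conditional-on-nothing expectation of
\[
\exp\Big\{-\lambda\hat X_t-\int_0^t \hat X_s\, g(t-s)\,ds\Big\},\qquad g(u):=\int_{B(x_0,\varepsilon)}\big(1-e^{-z v_u(\lambda)}\big)\nu(dz).
\]

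The second step, which I expect to be the main computation, evaluates this functional of the CBI process $\hat X$ with mechanisms $(\hat\Psi,\Phi)$. I use the standard time-inhomogeneous Laplace functional for CBI processes, proved via Itô's formula applied to $\exp\{-\hat X_s u_{t-s}-\int_0^s\hat X_r g(t-r)dr\}$. This gives
\[
\mathbb E\exp\Big\{-\lambda\hat X_t-\int_0^t \hat X_s g(t-s)ds\Big\}=\exp\Big\{-\int_0^t b\,u_s\,ds\Big\},
\]
where $u$ solves
\[
\partial_s u_s=-\hat\Psi(u_s)+g(s),\qquad u_0=\lambda,
\]
and the initial value of $\hat X$ is $0$.

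The key algebraic check is that $u_s=v_s(\lambda)$ solves this equation. Write $\hat\Psi(v)-g$ with $\hat\beta=\beta-\int_B z\nu(dz)$:
\[
\hat\Psi(v)-g=-\beta v+\int_B zv\,\nu(dz)+\int_{B^c}(e^{-vz}-1+vz)\nu(dz)-\int_B(1-e^{-zv})\nu(dz)=\Psi(v).
\]
Since $\nu(B)<\infty$, every integral here is finite and the splitting is legitimate. Uniqueness of the solution to the ODE then gives $u=v(\lambda)$. Multiplying by $e^{-xv_t(\lambda)}$ from the independent $Y^x$ yields exactly \eqref{transition semigroup}.

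The remaining step is to justify the Markov property and the conditional version at time $r$. I will do this in two parts:
\begin{itemize}
\item Write $X^x_t$ for $t\ge r$ as a sum of conditionally independent CB evolutions of $Y^x_r$ and of each $w(r-s)$, using the branching property inside $\mathbb Q$. Reassembled by Theorem \ref{branching property}, these evolutions give a CB process started from $Y^x_r+\sum w(r-s)$.
\item Add $\hat X$ and the new clusters on $(r,t]$. The same Laplace computation then applies with initial value $X^x_r$.
\end{itemize}
This bookkeeping of the Markov property of the superposed clusters is the main technical obstacle, and it is where the flow of CB processes should be handled carefully. As an alternative I would first try a more direct route: show that \eqref{CBI cluster} solves \eqref{CBI SDE} in law, by assembling the jumps of all clusters into a single Poisson random measure with intensity $ds\,du\,\nu(d\xi)$ and invoking pathwise uniqueness.
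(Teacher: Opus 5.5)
Your proposal is correct and follows essentially the paper's argument: condition on $\hat X$, apply the exponential formula to the cluster term, and evaluate the resulting functional of $\hat X$ with the It\^o martingale built from $v_{t-\cdot}(\lambda)$, which works precisely because $\hat\Psi(v)-\int_{B(x_0,\varepsilon)}(1-e^{-zv})\nu(dz)=\Psi(v)$. The paper does this directly conditionally on $\mathcal G_r=\mathcal F^M_r\vee\hat{\mathcal F}_r\vee\mathcal F^Y_r$, treating clusters started before $r$ by their Markov/branching property exactly as in your sketch; it plugs $v$ straight into the martingale, so no ODE uniqueness step is needed.
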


\proof
Define ${\mathcal{F}^Y_t}=\sigma\{Y^x_s: 0\leq s\leq t\}$, ${\hat{\mathcal{F}}_t}=\sigma\{\hat{X}_s: 0\leq s\leq t\}$, and
$\mathcal{F}^M_t=\sigma\{M ((0, s]\times U\times A) : U\in\mathcal{B}(\mathbb{R}_+),\; A \in\mathcal{A}_{t-s},\;0\leq s\leq t\}$.
Let $\mathcal{G}_t=
\mathcal{F}^M_t\vee\hat{\mathcal{F}}_t\vee\mathcal{F}^Y_t$. Note that $N$, $N_0$ and $M$ are independent of each other.
Then $\{Y^x_t: t\geq0\}$, $\{\hat{X}_t: t\geq0\}$ and $M$ are independent of each other.  For $0\leq r\leq t$ and $\lambda\geq0$, we have
\begin{align}\label{trans}
\mathbb{E}[e^{-\lambda X^x_t}|\,\mathcal{G}_r]&=\mathbb{E}[e^{-\lambda Y^x_t}|\,\mathcal{F}^Y_r]\mathbb{E}\Big[e^{-\lambda\hat{X}_t- \lambda\int_0^t
\int_0^{\hat{X}_{s-}}\int_{\mathbb{D}} w(t-s) M(ds,du,dw)}|\; \mathcal{F}^M_r\cup\hat{\mathcal{F}}_r\Big]\cr
&= e^{-Y_r^x v_{t-r}(\lambda)}\cdot I_1,
\end{align}
where $v_t(\lambda)$ is given by \eqref{2.2}. Note that
\begin{align}\label{I_1}
I_1&= \mathbb{E} \bigg[\mathbb{E}\big[e^{-\lambda\hat{X}_t-\int_0^t
\int_0^{\hat{X}_{s-}}\int_{\mathbb{D}} \lambda w(t-s) M(ds,du,dw)}\big|\; \mathcal{F}^M_r\cup\hat{\mathcal{F}}_t\big]
\bigg| \mathcal{F}^M_r\cup\hat{\mathcal{F}}_r\bigg]\cr
&=\mathbb{E}\bigg[e^{-\lambda\hat{X}_t}\mathbb{E}\big[e^{-\int_0^t\int_0^{\hat{X}_{s-}}\int_{\mathbb{D}}  \lambda w(t-s) M(ds,du,dw)}\,\big|\mathcal{F}^M_r\cup\hat{\mathcal{F}}_t\big]\bigg| \mathcal{F}^M_r\cup\hat{\mathcal{F}}_r\bigg]\cr
&=\mathbb{E}\big[e^{-\lambda\hat{X}_t}\cdot I_2\,\big| \mathcal{F}^M_r\cup\hat{\mathcal{F}}_r\big].
\end{align}
Now we turn to $I_2$. As mentioned above, $\hat{X}$ and $M$ are independent of each other. Furthermore, conditional on $\hat{X}$, $1_{[0,\hat{X}_{s-}]}(u)M(ds,du,dw)$ is a time-inhomogeneous Poisson point process and thus has independent increments. Then
\begin{align}\label{I_2}
I_2&=\mathbb{E}\big[e^{-(\int_0^r+\int_r^t)\int_0^{\hat{X}_{s-}}\int_{\mathbb{D}} \lambda w(t-s) M(ds,du,dw)}\,\big|\mathcal{F}^M_r\cup\hat{\mathcal{F}}_t\big]\cr
&=\mathbb{E}\big[e^{-\int_0^r\int_0^{\hat{X}_{s-}}\int_{\mathbb{D}}\lambda w(t-s)M(ds,du,dw)}\big|\mathcal{F}^M_r
\cup\hat{\mathcal{F}}_t\big]\mathbb{E} \big[e^{-\int_r^t\int_0^{\hat{X}_{s-}}\int_{\mathbb{D}}\lambda w(t-s)M(ds,du,dw)}\big|\hat{\mathcal{F}}_t\big]\cr
&= e^{-\int_0^r\int_0^{\hat{X}_{s-}}\int_{\mathbb{D}}w(r-s)v_{t-r}(\lambda)M(ds,du,dw)}
e^{\int_r^t\hat{X}_sds\int_{B(x_0,\varepsilon)}(e^{-xv_{t-s}(\lambda)}-1)\nu(dx)},
\end{align}
where the final equality follows from the branching property and the exponential formula. Applying It\^o's formula to the truncated process, we have
\[
e^{-v_{t-u}(\lambda)\hat{X}_u-\int_0^u \hat{X}_sds\int_{B(x_0,\varepsilon)}(1-e^{-xv_{t-s}(\lambda)})\nu(d x)+b\int_0^u
v_{t-s}(\lambda)ds},\quad 0\leq u\leq t,
\]
is a $\hat{\mathcal{F}}_u$-martingale. Note that $v_0(\lambda)=\lambda$.  Then
\[
\begin{aligned}
&\mathbb{E}\Big[e^{-\lambda \hat{X}_t-\int_0^t\hat{X}_sds\int_{B(x_0,\varepsilon)}(1-e^{-xv_{t-s}(\lambda)})\nu(dx)+b\int_0^tv_{t-s}(\lambda)ds}\big|\hat{\mathcal{F}}_r\Big]\\
&= e^{-v_{t-r}(\lambda)\hat{X}_r-\int_0^r \hat{X}_sds\int_{B(x_0,\varepsilon)}(1-e^{-xv_{t-s}(\lambda)})\nu(dx)+b\int_0^r v_{t-s}(\lambda)ds},
\end{aligned}
\]
which implies that
\begin{align}\label{mart}
\mathbb{E}\Big[e^{-\lambda \hat{X}_t-\int_r^t\hat{X}_sds\int_{B(x_0,\varepsilon)}(1-e^{-xv_{t-s}(\lambda)})\nu(dx)}\big|\hat{\mathcal{F}}_r\Big]=e^{-v_{t-r}(\lambda)\hat{X}_r-b\int_r^tv_{t-s}(\lambda)ds}.
\end{align}
By \eqref{trans}, \eqref{I_1} and \eqref{I_2}, we have
\[
\begin{aligned}
&\mathbb{E}[e^{-\lambda X^x_t}|\,\mathcal{G}_r]\cr
&= e^{-Y_r^x v_{t-r}(\lambda)}e^{-\int_0^r\int_0^{\hat{X}_{s-}}\int_{\mathbb{D}}w(r-s)v_{t-r}(\lambda)M(ds,du,dw)}\cr &\qquad\qquad\cdot\mathbb{E}\Big[e^{-\lambda \hat{X}_t-\int_r^t\hat{X}_sds\int_{B(x_0,\varepsilon)}(1-e^{-xv_{t-s}(\lambda)})\nu(dx)}\big|\mathcal{F}^M_r
\cup\hat{\mathcal{F}}_r\Big]\cr
&= e^{-Y_r^x v_{t-r}(\lambda)}e^{-\int_0^r\int_0^{\hat{X}_{s-}}\int_{\mathbb{D}}w(r-s)v_{t-r}(\lambda)M(ds,du,dw)}\cr &\qquad\qquad\cdot\mathbb{E}\Big[e^{-\lambda \hat{X}_t-\int_r^t\hat{X}_sds\int_{B(x_0,\varepsilon)}(1-e^{-xv_{t-s}(\lambda)})\nu(dx)}
\big|\hat{\mathcal{F}}_r\Big]\cr
&= e^{-v_{t-r}(\lambda)X_r^x-b\int_r^tv_{t-s}(\lambda)ds},
\end{aligned}
\]
which shows that $\{X^x_t: t\geq0\}$ is a CBI process with mechanisms $(\Psi, \Phi)$. The second equality follows from the fact that
 $\hat{X}$ and $M$ are independent of each other and the last equality follows from \eqref{CBI cluster} and \eqref{mart}.
\qed

\section{Proof of main results}

\subsection{Negative moments of CBI processes}

This subsection establishes bounds for two negative moments of CBI occupation integrals. We begin with the following lemma.
\blemma\label{Sch02}(Sch\"urger \cite[Theorem 1.1]{Sch})
Let $X\ge0$ be a random variable with Laplace transform $\mathcal{L}_X$. Then
\[
\mbb{E}\Big[X^{-r}\Big]=\frac{1}{r\Gamma{(r)}}\int_0^\infty \mathcal{L}_X(t^{1/r}) dt,\quad r>0,
\]
where $\Gamma$ denotes the Gamma function.
\elemma

The main estimate is the following proposition.

\begin{prop}\label{moment of 1/Y}
Consider a CBI process $\{X_t:t\geq0\}$ starting from $x\geq0$ with mechanisms ($\Psi,\Phi$). Suppose that $\beta<0$ or $\nu\neq0$. Then there exist $K_0,K_1>0$ independent of $x$, such that
\[
\mbb{E}_x\Big[\Big(\int_0^t X_s ds\Big)^{-\frac{1}{2}}\Big]\leq \frac{K_0}{t\wedge\sqrt{t}},\qquad \mbb{E}_x\Big[\Big(\int_0^t X_s ds\Big)^{-1}\Big]\leq \frac{K_1}{t^2\wedge t},\qquad t>0,
\]
where $\mbb{E}_x(\cdot):=\mbb{E}(\cdot|X_0=x)$.
\end{prop}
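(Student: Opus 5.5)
The plan is to apply Lemma \ref{Sch02} to $Z_t:=\int_0^t X_s\,ds$ with $r=\frac12$ and $r=1$. This reduces both bounds to integrability estimates on the Laplace transform $\mathcal L_{Z_t}(\lambda)=\mbb E_x[e^{-\lambda Z_t}]$, which I would control uniformly in $x$.

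\textbf{Step 1 (Laplace transform).} First I would establish the standard formula for the occupation integral of a CBI process:
\[
\mbb E_x\Big[e^{-\lambda\int_0^t X_s ds}\Big]=\exp\Big\{-x\,u_t(\lambda)-b\int_0^t u_s(\lambda)\,ds\Big\},
\]
where $s\mapsto u_s(\lambda)\ge0$ solves $\partial_s u_s(\lambda)=\lambda-\Psi(u_s(\lambda))$ with $u_0(\lambda)=0$. To prove it, I would argue as in the proof of Proposition \ref{cluster decomposition}. Applying It\^o's formula to \eqref{CBI SDE} shows that
\[
\exp\Big\{-u_{t-r}(\lambda)X_r-\lambda\int_0^r X_s ds-b\int_0^r u_{t-s}(\lambda)ds\Big\},\qquad 0\le r\le t,
\]
is a martingale. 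Comparing the values at $r=0$ and $r=t$ gives the formula. Since $u\ge0$, this yields $\mathcal L_{Z_t}(\lambda)\le \exp\{-b\int_0^t u_s(\lambda)ds\}$, which is independent of $x$. This is why the constants $K_0,K_1$ do not depend on $x$.

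\textbf{Step 2 (lower bound on $u$).} Using $e^{-z}-1+z\le z\wedge z^2$, I get $\int(e^{-u\xi}-1+u\xi)\nu(d\xi)\le (u\vee u^2)\int(\xi\wedge\xi^2)\nu(d\xi)$. Hence $\Psi(u)\le g(u):=C(u+u^2)$ with $C=|\beta|+\int(\xi\wedge\xi^2)\nu(d\xi)$. By ODE comparison, $u_s(\lambda)\ge w_s$, where $w'=\lambda-g(w)$ and $w_0=0$. Let $a(\lambda)$ be the root of $g(a)=\lambda/2$, so that $a(\lambda)\ge c\min(\lambda,\sqrt\lambda)$. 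As long as $w\le a(\lambda)$ we have $w'\ge\lambda/2$, so $w_s\ge\min(\lambda s/2,a(\lambda))$. Integrating,
\[
\int_0^t u_s(\lambda)ds\ \ge\ c\min\big(\lambda t^2,\ a(\lambda)t\big)\ \ge\ c\min(\lambda t^2,\lambda t,\sqrt\lambda\, t),
\]
and therefore $\mathcal L_{Z_t}(\lambda)\le e^{-cb\lambda t^2}+e^{-cb\lambda t}+e^{-cb\sqrt\lambda\, t}$.

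\textbf{Step 3 (integration).} For $r=1$, Lemma \ref{Sch02} gives $\mbb E_x[Z_t^{-1}]=\int_0^\infty\mathcal L_{Z_t}(\lambda)d\lambda$. The three terms contribute $O(t^{-2})$, $O(t^{-1})$ and $O(t^{-2})$ respectively, so the sum is at most $K_1/(t^2\wedge t)$. For $r=\frac12$, Lemma \ref{Sch02} gives $\mbb E_x[Z_t^{-1/2}]=\frac{2}{\Gamma(1/2)}\int_0^\infty\mathcal L_{Z_t}(s^2)ds$. The terms $e^{-cbs^2t^2}$, $e^{-cbs^2t}$ and $e^{-cbst}$ integrate to $O(t^{-1})$, $O(t^{-1/2})$ and $O(t^{-1})$, so the sum is at most $K_0/(t\wedge\sqrt t)$.

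\textbf{Main obstacle.} I expect the main difficulty to be Step 1. It needs a rigorous verification of the martingale and of well-posedness of the Riccati-type equation for $u$, including existence of a nonnegative global solution and the comparison with $w$. The nonnegativity comes from $\lambda-\Psi(0)=\lambda\ge0$ at $u=0$. The estimates in Steps 2 and 3 are then elementary. Where the hypothesis $\beta<0$ or $\nu\neq0$ enters is only to make $C$ and $g$ well defined and nondegenerate; the argument above uses just the upper bound $\Psi\le g$ and $b>0$.
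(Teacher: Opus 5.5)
Your proposal is correct and follows essentially the paper's route: the Laplace transform of the occupation integral (which the paper cites from Filipovi\'c), a quadratic upper bound on $\Psi$, ODE comparison, and Sch\"urger's formula. Your threshold bound $w_s\ge\min(\lambda s/2,a(\lambda))$ replaces the paper's explicit Riccati solution by a more elementary argument. It gives the equivalent estimate $\int_0^t u_s(\lambda)\,ds\gtrsim\min(\lambda t^2,\lambda t,\sqrt{\lambda}\,t)\asymp\frac{\lambda t}{D_\lambda}(1\wedge D_\lambda t)$, and this spares you the paper's case split $t\le1$ versus $t>1$.

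One slip in Step 1: the exponent of your martingale must contain $+b\int_0^r u_{t-s}(\lambda)\,ds$, not $-b\int_0^r u_{t-s}(\lambda)\,ds$. With the minus sign the drift is $-2b\,u_{t-r}(\lambda)$ times the process, and only with the plus sign does comparing $r=0$ and $r=t$ give the formula you state.
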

\proof  By Filipovi\'c \cite[Theorem 5.3]{Fil01}, the Laplace transform of the occupation integral is
\begin{align}\label{laplace of integ}
\mbb{E}_x\left[\exp\left\{-\lambda\int_0^t X_s ds\right\}\right]
=\exp\left(-xV_t(\lambda)-b\int_0^t V_s(\lambda) ds\right),
\end{align}
where $t\mapsto V_t(\lambda)$ is the unique solution of
$$
\frac{\partial V_t(\lambda)}{\partial t}=-\Psi(V_t(\lambda))+\lambda,\qquad V_0(\lambda)=0;
$$
see also Li \cite[Chapter 4]{Li20}. For fixed $\delta>0$,
\[
\begin{aligned}
\Psi(u)&=\left(-\beta+\int_\delta^{\infty}\xi \nu(d\xi)\right)u
+\int_0^\delta(e^{-u\xi}-1+u\xi) \nu(d\xi)+\int_\delta^{\infty}(e^{-u\xi}-1) \nu(\mrm{d}\xi)\\
&\leq\left(-\beta+\int_\delta^{\infty}\xi \nu(d\xi)\right)u+\frac{1}{2}\int_0^\delta\xi^2 \nu(d\xi)u^2\leq\hat{\gamma}u+\hat{c}u^2,
\end{aligned}
\]
where
\[
\hat{\gamma}=1-\beta+\int_\delta^\infty\xi \nu(d\xi)>1,\qquad
\hat{c}=1+\frac{1}{2}\int_0^\delta\xi^2 \nu(d\xi)>1.
\]
Therefore,
\[
\frac{\partial V_t(\lambda)}{\partial t}\geq-\hat{\gamma}V_t(\lambda)-\hat{c}V_t^2(\lambda).
\]
Comparison with the associated Riccati equation yields
\[
\begin{aligned}
V_t(\lambda)&\geq\frac{2\lambda(e^{\sqrt{\hat{\gamma}^2+4\hat{c}\lambda}t}-1)}
{e^{\sqrt{\hat{\gamma}^2+4\hat{c}\lambda}t}(\sqrt{\hat{\gamma}^2+4\hat{c}\lambda}+\hat{\gamma})
+\sqrt{\hat{\gamma}^2+4\hat{c}\lambda}-\hat{\gamma}}\\
&=\frac{2\lambda(1-e^{-\sqrt{\hat{\gamma}^2+4\hat{c}\lambda}t})}
{\sqrt{\hat{\gamma}^2+4\hat{c}\lambda}+\hat{\gamma}
+(\sqrt{\hat{\gamma}^2+4\hat{c}\lambda}-\hat{\gamma})e^{-\sqrt{\hat{\gamma}^2+4\hat{c}\lambda}t}}\\
&\geq\frac{\lambda(1-e^{-\sqrt{\hat{\gamma}^2+4\hat{c}\lambda}t})}{\sqrt{\hat{\gamma}^2+4\hat{c}\lambda}}.
\end{aligned}
\]
Write $D_\lambda=(\hat{\gamma}^2+4\hat{c}\lambda)^{1/2}$.  An application of integration gives
\begin{align}\label{integrated Riccati lower}
\int_0^tV_s(\lambda)\,ds
\geq\frac{\lambda}{D_\lambda}
\left(t-\frac{1-e^{-D_\lambda t}}{D_\lambda}\right)
\geq e^{-1}\frac{\lambda t}{D_\lambda}(1\wedge D_\lambda t),
\end{align}
where we have used the elementary inequality
$1-(1-e^{-z})/z\geq e^{-1}(1\wedge z)$ for $z>0$.

For $p\in\{1,2\}$, combining \eqref{laplace of integ}, \eqref{integrated Riccati lower} with Lemma \ref{Sch02} gives
\begin{align}\label{Laplace integral bound}
\mbb E_x\left[\left(\int_0^t X_s ds\right)^{-\frac{1}{p}}\right]&=
\frac{p}{\Gamma(1/p)}
\int_0^\infty
\mbb E_x\left[
\exp\left(-\lambda^p\int_0^tX_s\,ds\right)\right]d\lambda\cr
&\leq\frac{p}{\Gamma(1/p)}\int_0^\infty
\exp\left[-\frac{b\lambda^pt}{eD_{\lambda^p}}
\left(1\wedge D_{\lambda^p}t\right)\right]d\lambda.
\end{align}
If $t>1$, then $D_{\lambda^p}t>1$. It follows that
\begin{align}\label{est of t>1}
\int_0^\infty
\exp\left[-\frac{b\lambda^pt}{eD_{\lambda^p}}
\left(1\wedge D_{\lambda^p}t\right)\right]d\lambda
&=\int_0^\infty \exp\left(-\frac{b\lambda^p t}{eD_{\lambda^p}}\right)d\lambda\cr
&\le\int_0^1 e^{-c_1\lambda^pt}d\lambda
+\int_1^\infty e^{-c_2\lambda^{p/2}t}d\lambda\le c_3t^{-1/p}
\end{align}
for some positive constants
$c_i (i=1,2,3)$. We now suppose that $0<t\le 1$. By the definition of $D_\lambda$, there are
constants $0<c_4<1 <c_5<\infty$ such that
\begin{align}\label{ineq:3.4}
c_4(1+\lambda^{p/2})\leq D_{\lambda^p}
\leq c_5(1+\lambda^{p/2}),\qquad \lambda\geq0.
\end{align}
 On the interval $\lambda\in[0,t^{-2/p}]$, we have from \eqref{ineq:3.4} that
\[
D_{\lambda^p}\le c_5(1+\lambda^{p/2})
\le c_5\left(1+\frac1t\right)\le \frac{2c_5}{t}, \qquad
1\wedge D_{\lambda^p}t\ge\frac{D_{\lambda^p}t}{2c_5},
\]
which implies that
\begin{align}\label{ineq:3.5}
\frac{\lambda^p t}{D_{\lambda^p}}(1\wedge D_{\lambda^p}t)\ge
\frac{\lambda^pt^2}{2c_5}.
\end{align}
On the other hand, on the interval $\lambda\in(t^{-2/p},\infty)$, we have from \eqref{ineq:3.4} that
\[
1\wedge D_{\lambda^p}t\ge 1\wedge \left[c_4\left(1+\lambda^{p/2}\right) t\right]\ge 1\wedge c_4=c_4,
\]
which implies
\begin{align}\label{ineq:3.6}
\frac{\lambda^p t}{D_{\lambda^p}}(1\wedge D_{\lambda^p}t)\ge
\frac{c_4\lambda^pt}{c_5(1+\lambda^{p/2})}\ge\frac{c_4\lambda^{p/2}t}{2c_5}.
\end{align}
Then, by \eqref{ineq:3.5} and \eqref{ineq:3.6}, for $0<t\le1$,
\begin{align}\label{est of t<1}
\int_0^\infty
\exp\left[-\frac{b\lambda^pt}{eD_{\lambda^p}}
\left(1\wedge D_{\lambda^p}t\right)\right]d\lambda
&\le\int_0^{t^{-2/p}} e^{-c_6\lambda^pt^2}d\lambda
+\int_{t^{-2/p}}^\infty e^{-c_7\lambda^{p/2}t}d\lambda\le c_8t^{-2/p}
\end{align}
for some positive constants
$c_i (i=6,7,8)$.

In conclusion, by \eqref{est of t>1} and \eqref{est of t<1},
\[
\begin{aligned}
\int_0^\infty
\exp\left\{-\frac{b\lambda^pt}{eD_{\lambda^p}}
(1\wedge D_{\lambda^p}t)\right\}d\lambda
\leq C_p
\begin{cases}
t^{-2/p},&0<t\leq1,\\
t^{-1/p},&t>1
\end{cases}
\end{aligned}
\]
for some $C_p>0$. Thanks to \eqref{Laplace integral bound}, we complete the proof.
\qed

\subsection{Proof of Theorem \ref{main theorem}}

We need a crucial lemma.

\blemma\label{Palm} (Palm formula)
Let $E_p$ denote the space of point measures on $E$, and let $G: E\times E_p\rightarrow\mathbb{R}_+$ be a measurable functional. If $N$ is a Poisson point measure with intensity $n$, then we have
\[
 \mathbb{E}\left(\int_E f(x)G(x,N)N(dx)\right)=\int_E\mathbb{E}(G(x,\delta_x+N))
 f(x)n(dx),\qquad f: E\to\mathbb R_+.
\]
\elemma

We refer to Bertoin \cite[Lemma 2.3]{B06} for a proof.

\medskip

\noindent{\it Proof of Theorem \ref{main theorem}.} We divide the proof into three steps.

{\bf Step 1}. From now on, we write $B(\varepsilon)=B(x_0,\varepsilon)$ for simplicity. Fix $T>0$. Define
\[
\hat{A}_T:=\int_0^T\hat{X}_{s-}\,ds,\qquad \hat{\mathbb{P}}(\cdot):=\mathbb{P}(\cdot\mid\sigma\{\hat{X}_t:t\geq0\})
\]
and let $\hat{\mbb{E}}$ denote the associated expectation. Under $\hat{\mathbb{P}}$, let $(\tau,\zeta,\xi)$ be a random vector on $\mathcal E:=(0,\infty)^2\times{\mathbb D}[0,\infty)$ with distribution
$$
\frac{\mbf{1}_{(0,T]}(s)\mbf{1}_{(0,\hat{X}_{s-}]}(u)\mbf{1}_{B(\varepsilon/2)}(w(0))dsdu\rho(dw)}
{\nu(B(\varepsilon/2))\hat{A}_T}
$$
such that {$(\tau,\zeta,\xi), Y^x$ and $M$ are independent under $\hat{\mbb{P}}$}. Let
$D_p$ be the space of point measures on $\mathcal E$ and let $F: D_p\rightarrow\mbb{R}_+$ be a measurable functional. Then $M\in D_p$ and $M+\delta_{(\tau,\zeta,\xi)}\in D_p$. By Lemma \ref{Palm},
\begin{align}\label{palm appli}
&\hat{\mbb{E}}[F(M+\delta_{(\tau,\zeta,\xi)})]\cr
=&\hat{\mbb{E}}\Big[\int_{\mathcal E}
\frac{\mbf{1}_{(0,T]}(s)\mbf{1}_{(0,\hat{X}_{s-}]}(u)\mbf{1}_{B(\varepsilon/2)}(w(0))}
{\nu(B(\varepsilon/2))\hat{A}_T}\,M(ds,du,dw)F(M)\Big]\cr
=&\frac{\hat{\mbb{E}}\Big[\int_0^T\int_0^{\hat{X}_{s-}}\int_{{\mathbb D}[0,\infty)}
\mbf{1}_{B(\varepsilon/2)}(w(0))\,M(ds,du,dw)F(M)\Big]}
{\nu(B(\varepsilon/2))\hat{A}_T}.
\end{align}
For fixed $w'\in{\mathbb D}[0,\infty)$, we have
$$
\gamma\mapsto w'(T)+\hat{X}_T+\int_0^T\int_0^{\hat{X}_{s-}}\int_{{\mathbb D}[0,\infty)}w(T-s)\,\gamma(ds,du,dw),\quad \gamma\in D_p
$$
is a measurable functional from $D_p$ to $\mbb{R}_+$. For $f\in{\mathcal{B}_b(\mbb{R}_+)},$ write
\[
H(w',\hat{X},\gamma)=f\left(w'(T)+\hat{X}_T+\int_0^T\int_0^{\hat{X}_{s-}}\int_{{\mathbb D}[0,\infty)}w(T-s)\,\gamma(ds,du,dw)\right).
\]
Let
\[
\Theta_T^x(\gamma)=Y^x_T+\hat{X}_T
+\int_0^T\int_0^{\hat{X}_{s-}}\int_{{\mathbb D}[0,\infty)}
w(T-s)\,\gamma(ds,du,dw).
\]
Note that $\{Y^x_t:t\geq0\}$ is independent of $\{\hat{X}_t:t\geq0\}$, $M$ and $(\tau,\zeta,\xi)$. Then we have
\begin{align}\label{est in Step1}
&\hat{\mbb{E}}\left[f\big(\Theta_T^x(M+\delta_{(\tau,\zeta,\xi)})\big)\right]=\int_{{\mathbb D}[0,\infty)}\hat{\mbb{E}}[H(w',\hat{X},M+\delta_{(\tau,\zeta,\xi)})]\,\mathbb{Q}_x(dw')\cr
&=\int_{{\mathbb D}[0,\infty)}
\frac{\hat{\mbb{E}}\left[\int_0^T\int_0^{\hat{X}_{s-}}\int_{{\mathbb D}[0,\infty)}
\mbf{1}_{B(\varepsilon/2)}(w(0))\,M(ds,du,dw)H(w',\hat{X},M)\right]}
{\nu(B(\varepsilon/2))\hat{A}_T}\,\mathbb{Q}_x(dw')\cr
&=\frac{\hat{\mbb{E}}\left[\int_0^T\int_0^{\hat{X}_{s-}}\int_{{\mathbb D}[0,\infty)}
\mbf{1}_{B(\varepsilon/2)}(w(0))\,M(ds,du,dw)f\left(\Theta_T^x(M)\right)\right]}
{\nu(B(\varepsilon/2))\hat{A}_T}\cr
&=\frac{\hat{\mbb{E}}\left[\int_0^T\int_0^{\hat{X}_{s-}}\int_{{\mathbb D}[0,\infty)}
\mbf{1}_{B(\varepsilon/2)}(w(0))\,M(ds,du,dw)f(X^x_T)\right]}
{\nu(B(\varepsilon/2))\hat{A}_T}\cr
&=\frac{\hat{\mbb{E}}\left[f(X^x_T)\sum\limits_{i=1}^{J_T}\mbf{1}_{\{\eta_i\in B(\varepsilon/2)\}}\right]}
{\nu(B(\varepsilon/2))\hat{A}_T},
\end{align}
where the second equality follows from Lemma \ref{Palm},
\[
J_t:=
\int_0^t\int_0^{\hat X_{s-}}
\int_{\mathbb D[0,\infty)}
\mathbf 1_{\{w(0)\in B(\varepsilon)\}}
M(ds,du,dw),\qquad t\ge0
\]
and $\{\eta_i\}_{i\geq1}$ are i.i.d. random variables with distribution $\frac{\nu(\cdot\cap B(\varepsilon))}{\nu(B(\varepsilon))}$.
Note that under $\hat{\mathbb P}$, $\{J_t:t\ge0\}$ is a time-inhomogeneous Poisson process and $J_T$ has parameter $
\nu(B(\varepsilon))\hat{A}_T.$

{\bf Step 2}. Suppose that $x\geq y\geq0$. Under the common cluster construction \eqref{CBI cluster}, define
\[
\widetilde Y_t:=X_t^x-X_t^y=Y^x_t-Y^y_t.
\]
It follows from Theorem \ref{branching property} that $\tilde{Y}$ is also a CB process with probability distribution $\mathbb{Q}_{x-y}(dw)$ on ${\mathbb D}[0,\infty)$ and it is independent of $\{X^y_t:t\geq0\}$. For the random time $\tau$, we also define $\tilde{Y}_{\tau+\cdot}:=\{\tilde{Y}_{\tau+t}:t\geq0\}$. Then
\begin{align}\label{ineq:3.10}
&\hat{\mbb{E}}\left[f\left(Y^x_T+\hat{X}_T+\int_0^T\int_0^{\hat{X}_{s-}}\int_{{\mathbb D}[0,\infty)}w(T-s)\,(M+\delta_{(\tau,\zeta,\xi)})(ds,du,dw)\right)\right]\cr
=&\hat{\mbb{E}}\left[f\left(Y^y_T+\tilde{Y}_T+\hat{X}_T+\int_0^T\int_0^{\hat{X}_{s-}}\int_{{\mathbb D}[0,\infty)}w(T-s)\,(M+\delta_{(\tau,\zeta,\xi)})(ds,du,dw)\right)\right]\cr
=&\hat{\mbb{E}}\left[f\left(Y^y_T+\hat{X}_T+\int_0^T\int_0^{\hat{X}_{s-}}\int_{{\mathbb D}[0,\infty)}w(T-s)\,(M+\delta_{(\tau,\zeta,\xi+\tilde{Y}_{\tau+\cdot})})(ds,du,dw)\right)\right]\cr
=&\int_{{\mathbb D}[0,\infty)}\hat{\mbb{E}}\left[H\left(w',\hat{X},M+\delta_{(\tau,\zeta,\xi+\tilde{Y}_{\tau+\cdot})}\right)\right]\,\mathbb{Q}_y(dw'),
\end{align}
where the last equality uses the independence of $Y^y$ from $\tilde Y$, $(\tau,\zeta,\xi)$, and $M$. Fix $w'\in\mathbb D[0,\infty)$.
\begin{align}\label{ineq:3.11}
&\hat{\mbb{E}}\left[H\left(w',\hat{X},M+\delta_{(\tau,\zeta,\xi+\tilde{Y}_{\tau+\cdot})}\right)\right]\cr
=&\int_0^\infty\int_0^\infty\int_{{\mathbb D}[0,\infty)}\int_{{\mathbb D}[0,\infty)}
\hat{\mbb{E}}\left[H\left(w',\hat{X},M+\delta_{(s,u,w(\cdot)+w''(s+\cdot))}\right)\right]\cr
&\qqquad\quad\quad\times
\frac{\mbf{1}_{\{0\leq s\leq T\}}\mbf{1}_{\{0\leq u\leq \hat{X}_{s-}\}}\mbf{1}_{\{w(0)\in B(\varepsilon/2)\}}}{\nu(B(\varepsilon/2))\hat{A}_T}\,dsdu\rho(dw)\mathbb{Q}_{x-y}(dw'')\cr
=&\int_0^\infty\int_0^\infty\int_{{\mathbb D}[0,\infty)}
\hat{\mbb{E}}\left[H\left(w',\hat{X},M+\delta_{(s,u,w^*(\cdot))}\right)\right]\cr
&\qquad\quad\times
\frac{\mbf{1}_{\{0\leq s\leq T\}}\mbf{1}_{\{0\leq u\leq \hat{X}_{s-}\}}\mbf{1}_{\{r-z\in B(\varepsilon/2)\}}}{\nu(B(\varepsilon/2))\hat{A}_T}\,dsdu
\int_0^\infty\int_0^\infty\mathbb{Q}_{r}(dw^*)Q_s(x-y,dz)\nu(d(r-z))\cr
:=&I_1(w')+I_2(w'),
\end{align}
where
\[
\begin{aligned}
&I_1(w')=\int_0^\infty\int_0^\infty\int_{{\mathbb D}[0,\infty)}
\hat{\mbb{E}}\left[H\left(w',\hat{X},M+\delta_{(s,u,w^*(\cdot))}\right)\right]\cr
&\qquad\times
\frac{\mbf{1}_{\{0\leq s\leq T\}}\mbf{1}_{\{0\leq u\leq \hat{X}_{s-}\}}\mbf{1}_{\{r-z\in B(\varepsilon/2)\}}}{\nu(B(\varepsilon/2))\hat{A}_T}\,dsdu
\int_0^{\varepsilon/2}Q_s(x-y,dz)\int_0^\infty\mathbb{Q}_{r}(dw^*)\nu(d(r-z))\cr
&I_2(w')=\int_0^\infty\int_0^\infty\int_{{\mathbb D}[0,\infty)}
\hat{\mbb{E}}\left[H\left(w',\hat{X},M+\delta_{(s,u,w^*(\cdot))}\right)\right]\cr
&\qquad\times
\frac{\mbf{1}_{\{0\leq s\leq T\}}\mbf{1}_{\{0\leq u\leq \hat{X}_{s-}\}}\mbf{1}_{\{r-z\in B(\varepsilon/2)\}}}{\nu(B(\varepsilon/2))\hat{A}_T}\,dsdu
\int_{\varepsilon/2}^{\infty}Q_s(x-y,dz)\int_0^\infty\mathbb{Q}_{r}(dw^*)\nu(d(r-z)),
\end{aligned}
\]
and $w^*:=(w''(s+\cdot)+w(\cdot))$ denotes the path of a CB process with initial value $w''(s)+w(0)$ for any $s\geq0$. The second equality follows from the statement that for $u,v\geq0$, $\{Y^u_t+Y^v_t:t\geq0\}$ has the same distribution as $\{Y^{u+v}_t:t\geq0\}$, and $Q_s(x-y,\cdot)$ represents the transition semigroup of a CB process with initial value $x-y$.

Recall that $B(\varepsilon)$ is bounded away from 0. If $z\in(0,\varepsilon/2)$ and
$r-z\in B(\varepsilon/2)$, then $r\in B(\varepsilon)$. Using \(\nu(dr)=q(r)\,dr\) on \(B(\varepsilon)\), we have
\[
\int_{B(\varepsilon)}
\frac{q(r-z)}{q(r)}
\mathbf 1_{\{r-z\in B(\varepsilon/2)\}}\nu(dr)
=\nu(B(\varepsilon/2)),\quad z\in(0,\varepsilon/2).
\]
Then, by Lemma \ref{Palm} and Condition \ref{condition absolute}, we arrive at
\begin{align}\label{ineq:3.12}
I_1(w')
&=\int_0^\infty\int_0^\infty\int_{{\mathbb D}[0,\infty)}
\hat{\mbb{E}}\left[
H\left(w',\hat{X},M+\delta_{(s,u,w^*(\cdot))}\right)\right]\cr
&\quad\times
\frac{\mbf{1}_{\{0\leq s\leq T\}}
\mbf{1}_{\{0\leq u\leq \hat{X}_{s-}\}}
\mbf{1}_{\{r-z\in B(\varepsilon/2)\}}}
{\nu(B(\varepsilon/2))\hat{A}_T}\,dsdu\cr
&\quad\times
\int_0^{\varepsilon/2}Q_s(x-y,dz)
\int_0^\infty\mathbb{Q}_{r}(dw^*)\frac{q(r-z)}{q(r)}\nu(dr)\cr
&=\int_0^\infty\int_0^\infty\int_{{\mathbb D}[0,\infty)}
\hat{\mbb{E}}\left[
H\left(w',\hat{X},M+\delta_{(s,u,w^*(\cdot))}\right)\right]\cr
&\quad\times
\frac{\mbf{1}_{\{0\leq s\leq T\}}
\mbf{1}_{\{0\leq u\leq \hat{X}_{s-}\}}
\mbf{1}_{\{w^*(0)-z\in B(\varepsilon/2)\}}}
{\nu(B(\varepsilon/2))\hat{A}_T}\,dsdu\cr
&\quad\times
\int_0^{\varepsilon/2}Q_s(x-y,dz)
\int_0^\infty\mathbb{Q}_{r}(dw^*)
\frac{q(w^*(0)-z)}{q(w^*(0))}\nu(dr)\cr
&=\hat{\mbb{E}}\left[
H(w',\hat{X},M)
\int_0^T\int_0^{\hat{X}_{s-}}\int_{{\mathbb D}[0,\infty)}
\int_0^{\varepsilon/2}\frac{q(w^*(0)-z)}{q(w^*(0))}\right.\cr
&\left.\qquad\times
\frac{\mbf{1}_{\{w^*(0)-z\in B(\varepsilon/2)\}}}
{\nu(B(\varepsilon/2))\hat{A}_T}
\,Q_s(x-y,dz)M(ds,du,dw^*)\right].
\end{align}
Combining \eqref{ineq:3.10}, \eqref{ineq:3.11} and \eqref{ineq:3.12}, we have
\begin{align}\label{est in step2}
&\hat{\mbb{E}}\left[
f\left(Y^x_T+\hat{X}_T+
\int_0^T\int_0^{\hat{X}_{s-}}\int_{{\mathbb D}[0,\infty)}
w(T-s)\,(M+\delta_{(\tau,\zeta,\xi)})(ds,du,dw)\right)\right]\cr
=&\int_{{\mathbb D}[0,\infty)}(I_1(w')+I_2(w'))\,\mathbb{Q}_y(dw')\cr
=&\hat{\mbb{E}}\left[f(X^y_T)
\int_0^T\int_0^{\hat{X}_{s-}}\int_{{\mathbb D}[0,\infty)}
\int_0^{\varepsilon/2}\frac{q(w^*(0)-z)}{q(w^*(0))}\right.\cr
&\left.\qquad\qquad\times
\frac{\mbf{1}_{\{w^*(0)-z\in B(\varepsilon/2)\}}}
{\nu(B(\varepsilon/2))\hat{A}_T}
\,Q_s(x-y,dz)M(ds,du,dw^*)\right]+\int_{{\mathbb D}[0,\infty)}I_2(w')\,\mathbb{Q}_y(dw')\cr
=&
\frac{\hat{\mbb{E}}\left[f(X^y_T)\sum_{i=1}^{J_T}
\left(\int_0^{\varepsilon/2}\frac{q(\eta_i-z)}{q(\eta_i)}
\mbf{1}_{\{\eta_i-z\in B(\varepsilon/2)\}}
\,Q_{\tau_i}(x-y,dz)\right)\right]}
{\nu(B(\varepsilon/2))\hat{A}_T}
+\int_{{\mathbb D}[0,\infty)}I_2(w')\,\mathbb{Q}_y(dw'),
\end{align}
where $\tau_i$ is the $i$-th jump time of $\{J_t:t\in[0,T]\}$ for $i\geq1$.

{\bf Step 3}.
For $i\geq1$, define
\[
\bar{\varrho}_i:=
\frac{\nu(B(\varepsilon))}{\nu(B(\varepsilon/2))}
\mbf{1}_{\{\eta_i\in B(\varepsilon/2)\}},\qquad \tilde{\varrho}_i:=\int_{\varepsilon/2}^\infty Q_{\tau_i}(x-y,dz)
\]
and
\[
\varrho_i:=
\frac{\nu(B(\varepsilon))}{\nu(B(\varepsilon/2))}
\int_0^{\varepsilon/2}\frac{q(\eta_i-z)}{q(\eta_i)}
\mbf{1}_{\{\eta_i-z\in B(\varepsilon/2)\}}Q_{\tau_i}(x-y,dz)
+\tilde{\varrho}_i,
\]
where $\{\eta_i\}_{i\ge1}$ and $\{\tau_i\}_{i\ge1}$ are defined in Step~1 and Step~2, respectively. Let
\[
\bar{S}_T:=\frac{1}{\nu(B(\varepsilon))\hat{A}_T}\sum_{i=1}^{J_T}\bar{\varrho}_i,\qquad S_T:=\frac{1}{\nu(B(\varepsilon))\hat{A}_T}\sum_{i=1}^{J_T}\varrho_i,\qquad
\tilde{S}_T=\frac{1}{\nu(B(\varepsilon))\hat{A}_T}\sum_{i=1}^{J_T}\tilde{\varrho}_i.
\]
Then it follows from \eqref{est in Step1} and \eqref{est in step2} that
\begin{align}\label{identity in step3}
\hat{\mbb E}\left[f(X^y_T)\left(S_T-\tilde{S}_T\right)   \right]+\int_{\mbb D[0,\infty)}I_2(w')\mbb Q_y(dw')=\hat{\mbb E}\left[f(X^x_T)\bar{S}_T\right].
\end{align}
By the definition of $I_2$, we have
\[
\left|\int_{\mbb D[0,\infty)}I_2(w')\mbb Q_y(dw')\right|\le
\|f\|
\frac{\int_0^T\hat X_{s-}
\mathbb P_{x-y}(Y_s>\varepsilon/2)\,ds}{\hat{A}_T},
\]
where $Y$ is a CB process with initial value $x-y$. Moreover, the definition of $J$ and the compensation formula give
\[
\hat{\mbb E}[\tilde{S}_T]=\hat{A}_T^{-1}\int_0^T\hat{X}_{s-}\mbb P_{x-y}(Y_s>\varepsilon/2)ds.
\]
Then
combining those estimates with \eqref{identity in step3}, we have
\begin{align}\label{est in step3-1}
&|\hat{\mbb{E}}f(X^x_T)-\hat{\mbb{E}}f(X^y_T)|\cr
&\leq \|f\|\left(\hat{\mbb{E}}
\left|1-S_T\right|+\hat{\mbb{E}}
\left|1-\bar{S}_T\right|+2\hat{A}_T^{-1}\int_0^T\hat{X}_{s-}\mbb{P}_{x-y}(Y_s>\varepsilon/2)\,ds\right).
\end{align}
Let $\mathcal J_T:=\sigma\{J_s:0\leq s\leq T\}.$
Conditional on $\mathcal J_T$, the jump times $\{\tau_i\}$ are fixed,
whereas $\{\eta_i\}$ are independent with common distribution
$\frac{\nu(\cdot\cap B(\varepsilon))}{\nu(B(\varepsilon))}$.
Therefore, both sequences
$\{\bar\varrho_i\}$ and $\{\varrho_i\}$ are conditionally
independent given $\mathcal J_T$. For $\bar{\varrho}_i$, it is not hard to see that
\begin{align}\label{moment of bar varrho}
\hat{\mathbb E}
[\bar\varrho_i\mid\mathcal J_T]
=1,\qquad \hat{\mathbb E}
[\bar\varrho_i^2\mid\mathcal J_T]
=\frac{\nu(B(\varepsilon))}{\nu(B(\varepsilon/2))}.
\end{align}
For \(\varrho_i\), by using
$\nu(dr)=q(r)\,dr$ on $B(\varepsilon)$, we obtain
\begin{align}\label{first-moment of varrho}
&\hat{\mathbb E}
[\varrho_i\mid\mathcal J_T]\cr
&=
\frac{1}{\nu(B(\varepsilon/2))}
\int_0^{\varepsilon/2}Q_{\tau_i}(x-y,dz)
\int_{B(\varepsilon)}
\frac{q(r-z)}{q(r)}
\mathbf 1_{\{r-z\in B(\varepsilon/2)\}}\nu(dr)+
\int_{\varepsilon/2}^{\infty}Q_{\tau_i}(x-y,dz)\cr
&=
\int_0^{\varepsilon/2}Q_{\tau_i}(x-y,dz)
+\int_{\varepsilon/2}^{\infty}Q_{\tau_i}(x-y,dz)=1.
\end{align}
Thanks to Jensen's
inequality and the elementary inequality $(a+b)^2\le 2(a^2+b^2)$ for $a,b>0$, we get
\begin{align}\label{second-moment of varrho}
\hat{\mathbb E}
[\varrho_i^2\mid\mathcal J_T]
&\le
2+
\frac{2\nu(B(\varepsilon))}{\nu(B(\varepsilon/2))^2}
\int_{B(\varepsilon)}
\int_0^{\varepsilon/2}
\frac{q^2(r-z)}{q(r)}
\mathbf 1_{\{r-z\in B(\varepsilon/2)\}}
Q_{\tau_i}(x-y,dz)\,dr\cr
&\le
2+\frac{2\|q\|^2_{L^\infty(B(\varepsilon))}\nu(B(\varepsilon))}{\nu(B(\varepsilon/2))^2}
\int_{B(\varepsilon)}q(r)^{-1}\,dr:=\sigma_\varrho<\infty,
\end{align}
where $\|q\|_{L^\infty(B(\varepsilon))}<\infty$ is due to Condition \ref{condition absolute}.  Therefore, by \eqref{first-moment of varrho} and \eqref{second-moment of varrho},
\[
\begin{aligned}
&\hat{\mbb E}[S_T\mid\mathcal J_T]
=\frac{1}{\nu(B(\varepsilon))\hat{A}_T}\sum_{i=1}^{J_T}\hat{\mbb E}[\varrho_i\mid\mathcal J_T]
=\frac{J_T}{\nu(B(\varepsilon))\hat{A}_T},\cr
&\mrm{\hat{Var}}(S_T\mid\mathcal J_T)
=\frac{1}{\nu(B(\varepsilon))^2\hat{A}^2_T}\sum_{i=1}^{J_T}\mrm{\hat{Var}}(\varrho_i\mid\mathcal J_T)
\le\frac{J_T(\sigma_\varrho-1)}{\nu(B(\varepsilon))^2\hat{A}^2_T},
\end{aligned}
\]
where $\mrm{\hat{Var}}$ denote the variance under $\hat{\mbb P}$. By the conditional variance formula,
\begin{align}\label{variance of ST}
\hat{\mbb E}(1-S_T)^2
&=\hat{\mbb E}\left[\mrm{\hat{Var}}(S_T\mid\mathcal J_T)\right]
+\hat{\mbb E}\left[\left(1-\hat{\mbb E}[S_T\mid\mathcal J_T]\right)^2\right]\cr
&\le \frac{\sigma_\varrho-1}{\nu(B(\varepsilon))^2\hat{A}_T^2}\hat{\mbb E}[J_T]+\hat{\mbb E}\left[\left(1-\frac{J_T}{\nu(B(\varepsilon))\hat{A}_T}\right)^2\right]\cr
&=\frac{\sigma_\varrho-1}{\nu(B(\varepsilon))\hat{A}_T}
+\frac{1}{\nu(B(\varepsilon))\hat{A}_T}
=\frac{\sigma_\varrho}{\nu(B(\varepsilon))\hat{A}_T}.
\end{align}
Similarly, we have from \eqref{moment of bar varrho}  that
\begin{align}\label{variance of bar ST}
\hat{\mbb E}\left(1-\bar{S}_T\right)^2
\leq\frac{1}{\nu(B(\varepsilon/2))\hat{A}_T}.
\end{align}
Combining H\"older inequality under $\hat{\mathbb P}$, \eqref{est in step3-1}, \eqref{variance of ST} with \eqref{variance of bar ST}, we arrive at
\[
|\hat{\mbb{E}}f(X^x_T)-\hat{\mbb{E}}f(X^y_T)|
\leq \lambda_1\|f\|\hat{A}_T^{-1/2}
+\lambda_2\|f\|
\frac{\int_0^T\hat{X}_{s-}\mbb{P}_{x-y}(Y_s>\varepsilon/2)\,ds}{\hat{A}_T}
\]
for some constants $\lambda_1,\lambda_2>0$. Taking expectations on both sides gives
\begin{align}\label{total variation}
|\mbb{E}f(X^x_T)-\mbb{E}f(X^y_T)|
&\leq \mbb{E}\left|
\hat{\mbb{E}}f(X^x_T)-\hat{\mbb{E}}f(X^y_T)\right|\cr
&\leq \|f\|\left[
\lambda_1\mbb{E}\left[\hat{A}_T^{-1/2}\right]
+\lambda_2\mbb{E}\left[
\frac{\int_0^T\hat{X}_{s-}\mbb{P}_{x-y}(Y_s>\varepsilon/2)\,ds}
{\hat{A}_T}\right]\right].
\end{align}
Since $Y$ is a CB process with semigroup $(Q_t)_{t\ge0}$, for every $s>0$ we have
\begin{align}\label{tail prob of Ys}
\mbb{P}_{x-y}(Y_s>\varepsilon/2)
&=\mbb{P}_{x-y}\left(1-e^{-Y_s}>1-e^{-\varepsilon/2}\right)
\leq\frac{1}{1-e^{-\varepsilon/2}}\mbb{E}_{x-y}\left(1-e^{-Y_s}\right)\cr
&=\frac{1}{1-e^{-\varepsilon/2}}\left(1-e^{-(x-y)v_s(1)}\right)
\leq\frac{1}{1-e^{-\varepsilon/2}}(x-y)v_s(1),
\end{align}
where $t\mapsto v_t(1)$ is the unique solution of \eqref{2.2} with
$\lambda=1$. Note that
\begin{align}\label{ineq:3.24}
&\mbb{E}\left[
\frac{\int_0^T\hat{X}_{s-}\mbb{P}_{x-y}(Y_s>\varepsilon/2)\,ds}
{\hat{A}_T}\right]\cr
&=\mbb{E}\left[
\frac{\int_0^T\hat{X}_{s-}\mbb{P}_{x-y}(Y_s>\varepsilon/2)\,ds}
{\hat{A}_T}\mathbf{1}_{\{\hat{A}_T>\sqrt{T}\}}\right]
+\mbb{E}\left[
\frac{\int_0^T\hat{X}_{s-}\mbb{P}_{x-y}(Y_s>\varepsilon/2)\,ds}
{\hat{A}_T}\mathbf{1}_{\{\hat{A}_T\le\sqrt{T}\}}\right]\cr
&\leq
\frac{1}{\sqrt{T}}\int_0^T
\mbb{E}[\hat{X}_{s-}]\mbb{P}_{x-y}(Y_s>\varepsilon/2)\,ds+\sqrt{T}\,\mbb{E}\hat{A}_T^{-1}.
\end{align}
Since $\hat X$ is a subcritical CBI process satisfying \eqref{CBI_hat} with $\hat{\beta}<0$, Li \cite[(5.12)]{Li20} gives
$$
\sup_{s\ge0}\mathbb E[\hat X_{s-}]<\infty.
$$
Then by Proposition \ref{moment of 1/Y}, \eqref{tail prob of Ys} and \eqref{ineq:3.24}, for $T>1$,
\[
\mbb{E}\left[
\frac{\int_0^T\hat{X}_{s-}\mbb{P}_{x-y}(Y_s>\varepsilon/2)\,ds}
{\hat{A}_T}\right]\le
\frac{C_1|x-y|}{\sqrt{T}}\int_0^T v_s(1)\,ds+\frac{C_2}{\sqrt{T}}
\]
holds for some constant $C_1>0$.
Combining this bound with Proposition \ref{moment of 1/Y} and \eqref{total variation} gives
\[
\sup_{\|f\|\leq1}|\mbb{E}f(X^x_T)-\mbb{E}f(X^y_T)|
\leq
\frac{C_2}{\sqrt{T}}
\left(1+|x-y|\int_0^T v_s(1)\,ds\right)
\]
for some constant $C_2>0$. Moreover, for $0<u\le 1$ and $\xi\in B(\varepsilon)$, we have
\[
\Psi(u)=\int_0^\infty \left(e^{-u\xi}-1+u\xi\right)\nu(d\xi)
\ge\int_{B(\varepsilon)} \left(e^{-u\xi}-1+u\xi\right)\nu(d\xi)\ge c_0u^2
\]
for some $c_0>0$. Then by \eqref{2.2} and a comparison principle,
\[
v_t(1)\le \frac{1}{c_0t+1},\quad t\ge0.
\]
This implies that
\[
t^{-1/2}\int_0^t v_s(1) ds\le \frac{\log(c_0t+1)}{c_0\sqrt{t}}.
\]
This establishes Theorem \ref{main theorem}-(ii). In the subcritical case, by \eqref{change of vari of v}, we have
\[
\int_0^\infty v_s(1)\,ds
=\int_0^1\frac{u}{\Psi(u)}\,du\le \int_0^1 \frac{u}{-\beta u}du=-\frac1\beta,
\]
which proves Theorem \ref{main theorem}-(i).\qed

\subsection{Proof of Theorems \ref{main theorem2} and \ref{main theorem3}}

{\it Proof of Theorem \ref{main theorem2}.} Recall that
$\hat{\mbb{P}}(\cdot)=\mbb{P}(\cdot|\sigma(\hat{X}))$. Fix $T>0$ and
$x\geq y\geq0$. By the cluster decomposition,
\[
\begin{aligned}
X^x_T
&=Y^x_T+\hat{X}_T
+\int_0^T\int_0^{\hat{X}_{s-}}\int_{\mathbb D}w(T-s)M(ds,du,dw)\\
&=Y^y_T+\hat{X}_T
+\int_0^T\int_0^{\hat{X}_{s-}}\int_{\mathbb D}w(T-s)M(ds,du,dw)
+\tilde{Y}_T,
\end{aligned}
\]
where $\tilde{Y}=Y^x-Y^y$, and $\tilde{Y},Y^y,\hat{X}$ and $M$ are
independent. For any bounded Borel function $f$,
\begin{align}\label{bound of expon ergod}
\left|\mbb{E}f(X^x_T)-\mbb{E}f(X^y_T)\right|
\leq 2\|f\|\mbb{P}(\tau_1>T)+\left|\mbb{E}\left[
(f(X^x_T)-f(X^y_T))\mbf{1}_{\{\tau_1\leq T\}}\right]\right|,
\end{align}
where $\tau_1$ is
the first cluster jump whose size belongs to $B(\varepsilon)$, and
\begin{align}\label{tail prob of tau1}
\hat{\mbb{P}}(\tau_1>s)
=\exp\left\{-\nu(B(\varepsilon))\int_0^s\hat{X}_r\,dr\right\},\quad s>0.
\end{align}
Conditionally on $\hat X$, the first-jump density is
$\nu(B(\varepsilon))\hat X_{s-}\hat{\mbb{P}}(\tau_1>s)$. Using Condition \ref{condition basic}, we have
\begin{align}\label{ineq:3.17}
&\left|\mbb{E}\left[
(f(X^x_T)-f(X^y_T))\mbf{1}_{\{\tau_1\leq T\}}\right]\right|\cr
&\leq
\mbb{E}\int_0^T\nu(B(\varepsilon))\hat X_{s-}\hat{\mbb{P}}(\tau_1>s)
\left|\int_0^\infty
P_{T-s}f(X^y_{s-}+\tilde{Y}_{s-}+r)\bar{\nu}_{\varepsilon}(dr)\right.\cr
&\qqquad\qqquad\qqquad\left.
-\int_0^\infty
P_{T-s}f(X^y_{s-}+r)\bar{\nu}_{\varepsilon}(dr)\right|ds\cr
&\leq
K_{\varepsilon}\nu(B(\varepsilon))(x-y)\|f\|
\int_0^T \mbb E[\hat{X}_{s-}\hat{\mbb P}(\tau_1>s)]e^{\beta s}\,ds\cr
&\leq
K_{\varepsilon}(x-y)\|f\|,
\end{align}
where the last inequality is due to $\beta<0$ and
\[
\int_0^T\nu(B(\varepsilon))\hat{X}_{s-}\hat{\mbb P}(\tau_1>s)ds=1-\hat{\mbb P}(\tau_1>T)<1.
\]

Moreover, by Filipovi\'c \cite[Theorem 5.3]{Fil01} and \eqref{tail prob of tau1},
\begin{align}\label{ineq:3.18}
\mbb P(\tau_1>T)
=\exp\left[-b\int_0^T\hat{V}_s(\nu(B(\varepsilon)))ds\right],
\end{align}
where $t\to\hat{V}_t(\lambda)$ is the unique solution of
\[
\frac{\partial \hat{V}_t(\lambda)}{\partial t}=-\hat{\Psi}(\hat{V}_t(\lambda))+\lambda,\quad \hat{V}_0(\lambda)=0
\]
and $\hat{\Psi}$ is given by \eqref{mechanism of hat X}. Similarly as in the proof of Proposition \ref{moment of 1/Y}, there exist $\hat{\beta}_1,\hat{\beta}_2>0$ such that
\[
\hat{\Psi}(\lambda)\le \hat{\beta}_1\lambda+\hat{\beta}_2\lambda^2,\quad \hat{V}_t(\lambda)\ge\frac{\lambda\left(1-e^{-D_\lambda t}\right)}{D_\lambda},\quad \lambda\ge0,
\]
where \(D_\lambda=(\hat{\beta}_1^2+4\hat{\beta}_2\lambda)^{1/2}\). Then,
\[
\int_0^T\hat{V}_s(\lambda)ds\ge
\frac{\lambda}{D_\lambda}\left(T-\frac{1-e^{-D_\lambda T}}{D_\lambda}\right)\ge\frac{\lambda T}{D_\lambda}-\frac{\lambda}{D_\lambda^2}.
\]
Together with \eqref{ineq:3.18}, this implies that
\begin{align}\label{ineq:3.19}
\mbb P(\tau_1>T)\le C_\varepsilon e^{-\kappa_\varepsilon T}, \quad
C_\varepsilon=\exp\left(\frac{b\nu(B(\varepsilon))}{D^2_{\nu(B(\varepsilon))}}\right),\quad
\kappa_\varepsilon=\frac{b\nu(B(\varepsilon))}{D_{\nu(B(\varepsilon))}}.
\end{align}
Therefore, by \eqref{bound of expon ergod}, \eqref{ineq:3.17} and \eqref{ineq:3.19},
\[
\left|\mbb{E}f(X^x_T)-\mbb{E}f(X^y_T)\right|\le 2C_\varepsilon e^{-\kappa_\varepsilon T}\|f\|
+K_\varepsilon (x-y)\|f\|.
\]
An application of the Markov property gives
\[
\begin{aligned}
\left|\mbb{E}f(X^x_{2T})-\mbb{E}f(X^y_{2T})\right|
&\leq\mbb{E}\left|P_Tf(X^x_T)-P_Tf(X^y_T)\right|\cr
&\leq2C_\varepsilon\|f\|e^{-\kappa_\varepsilon T}
+K_\varepsilon\|f\|\mbb{E}(X^x_T-X^y_T)\cr
&=2C_\varepsilon\|f\|e^{-\kappa_\varepsilon T}
+K_\varepsilon\|f\|(x-y)e^{\beta T}.
\end{aligned}
\]
Finally, the desired result follows since $\beta<0$. \qed

{\it Proof of Theorem \ref{main theorem3}.} For each $n\ge1$, define
\[
\hat{\Psi}_n(\lambda):=\left(-\beta+\int_{B(\varepsilon_n)}\xi\nu(d\xi)\right)\lambda
+\int_{B^c(\varepsilon_n)}
\left(e^{-\lambda\xi}-1+\lambda \xi\right)\nu(d\xi),\quad \lambda\ge0.
\]
Fix some $\delta\in(0,1)$. We have
\[
\begin{aligned}
\int_{B^c(\varepsilon_n)}
\left(e^{-\lambda\xi}-1+\lambda \xi\right)\nu(d\xi)
&\le \frac{\lambda^2}{2}
\int_{B^c(\varepsilon_n)\cap(0,\delta)}\xi^2\nu(d\xi)
+\lambda\int_{B^c(\varepsilon)\cap[\delta,\infty)}\xi\nu(d\xi)\cr
&\le \frac{\lambda^2}{2}
\int_0^\delta\xi^2\nu(d\xi)
+\lambda\int_\delta^\infty\xi\nu(d\xi)
\end{aligned}
\]
due to the facts that
\[
e^{-z}-1+z\le z^2/2,\quad e^{-z}-1+z\le z,\quad z\ge0.
\]
Then
\[
\begin{aligned}
\hat{\Psi}_n(\lambda)&\le
\left[-\beta+\int_{B(\varepsilon_n)}\xi\nu(d\xi)+\int_\delta^\infty \xi\nu(d\xi)\right]\lambda+\frac{1}{2}\left[\int_0^\delta \xi^2\nu(d\xi)\right]\lambda^2\cr
&\le \left[c_*+\int_{B(\varepsilon_n)}\xi\nu(d\xi)\right]\lambda+c_{**}\lambda^2,
\end{aligned}
\]
where
\[
c_*=1-\beta+\int_\delta^\infty \xi\nu(d\xi),\quad c_{**}=1+\frac{1}{2}\int_0^\delta \xi^2\nu(d\xi).
\]
Set $\alpha_n:=\nu(B(\varepsilon_n))$. Similarly as in the proof of Theorem \ref{main theorem2}, the associated first cluster jump $\tau_1^{(n)}$ satisfies
\[
\mbb P\left(\tau^{(n)}_1>T\right)
=\exp\left[-b\int_0^T \hat{V}_s^{(n)}(\alpha_n)ds\right],
\]
and
\[
\hat{V}_s^{(n)}(\alpha_n)
\ge\frac{\alpha_n(1-e^{-D_{\alpha_n}t})}{D_{\alpha_n}},\quad D^2_{\alpha_n}=\left(c_*+\int_{B(\varepsilon_n)}\xi\nu(d\xi)\right)^2+4c_{**}\alpha_n.
\]
It follows that
\[
\mbb P\left(\tau_1^{(n)}>T\right)
\le C_{\varepsilon_n}e^{-\kappa_{\varepsilon_n}T},\qquad
C_{\varepsilon_n}=\exp\left(\frac{b\alpha_n}{D^2_{\alpha_n}}\right),\qquad
\kappa_{\varepsilon_n}=\frac{b\alpha_n}{D_{\alpha_n}}.
\]
For any $h>0$,
\[
\int_{B(\varepsilon_n)}\xi\nu(d\xi)=\int_{B(\varepsilon_n)\cap(0,h)}\xi\nu(d\xi)+\int_{B(\varepsilon_n)\cap[h,\infty)}\xi\nu(d\xi)\le h\alpha_n+\int_h^\infty \xi\nu(d\xi),
\]
which implies that
\[
\lim_{n\to\infty}\alpha_n^{-1}\int_{B(\varepsilon_n)}\xi\nu(d\xi)=0
\]
and so
\begin{align}\label{strong feller constant}
\lim_{n\to\infty}\kappa_{\varepsilon_n}=\infty.
\end{align}

Suppose that $\{r_k\}\subset\mbb{R}_+$ is a sequence such that $\lim_{k\rightarrow\infty}r_k=r_0\in\mbb{R}_+$. Following a similar argument as in the proof of Theorem \ref{main theorem2}, for any $t>0$ and $n\ge1$,
\[
\limsup_{k\rightarrow\infty}\|P_t(r_k,\cdot)-P_t(r_0,\cdot)\|_{\rm{Var}} =\limsup_{k\rightarrow\infty}\sup_{\|f\|\le1}\left|\mbb{E}f(X^{r_k}_{t})-\mbb{E}f(X^{r_0}_{t})\right|\le 2C_{\varepsilon_n}e^{-\kappa_{\varepsilon_n}t/2}.
\]
Thanks to \eqref{strong feller constant} and the fact that
$C_{\varepsilon_n}\le \exp(\frac{b}{4c_{**}})<\infty$, we finish the proof. \qed

\bigskip
\noindent

{\bf Acknowledgements.}
The research of Shukai Chen is supported
    by the National Key R\&D Program of China
(No.2022YFA1006003), NSFC grant of China
(No.12401167),
Fujian Provincial Natural Science Foundation of China (No.2024J08050). The research of Chunhua Ma is supported by NSFC grant of China (No.11871032). We are grateful to Pei-Sen Li and
Jian Wang for their helpful comments.

	\begin{singlespace}
		
	\end{singlespace}

	\vskip 0.2truein
	\vskip 0.2truein

\bigskip

\noindent{\bf Shukai Chen:}  School of Mathematics and Statistics, Fujian Normal University,
Fujian, P. R. China Email: {\texttt skchen@fjnu.edu.cn}

\bigskip
\noindent{\bf Chunhua Ma:}  School of Mathematical Sciences and LPMC, Nankai University, Tianjin, P. R. China  Email: {\texttt mach@nankai.edu.cn}

\end{document}